\newcommand{\C}{\mathbb{C}}
\newcommand{\N}{\mathbb{N}}
\newcommand{\R}{\mathbb{R}}
\newcommand{\EE}{\mathsf{E}} 
\newcommand{\bb}[1]{\boldsymbol{#1}}
\newcommand{\rd}{\hskip1pt\mathrm{d}\hskip0.25pt}
\newcommand{\etr}{\mathop{\mathrm{etr}}\hskip1pt}
\newtheorem{theorem}{Theorem}[section]
\newtheorem{lemma}[theorem]{Lemma}
\newtheorem{corollary}[theorem]{Corollary}
\newtheorem{remark}[theorem]{Remark}
\newtheorem{conjecture}[theorem]{Conjecture}
\begin{document}

\title[Wishart moment for the product of two disjoint principal minors]{An explicit Wishart moment formula for the product of two disjoint principal minors}

\author[C.\ Genest]{Christian Genest}
\address{Department of Mathematics and Statistics, McGill University, Montr\'eal (Qu\'ebec) Canada H3A 0B9}
\email{christian.genest@mcgill.ca}
\thanks{Genest's research is funded in part by the Canada Research Chairs Program and the Natural Sciences and Engineering Research Council of Canada.}

\author[F.\ Ouimet]{Fr\'ed\'eric Ouimet}
\address{Department of Mathematics and Statistics, McGill University, Montr\'eal (Qu\'ebec) Canada H3A 0B9}
\email{frederic.ouimet2@mcgill.ca}
\thanks{Ouimet's funding was made possible through a contribution to Christian Genest's research program from the Trottier Institute for Science and Public Policy.}

\author[D.\ Richards]{Donald Richards}
\address{Department of Statistics, Penn State University, University Park, PA 16802, USA}
\email{richards@stat.psu.edu}

\subjclass[2020]{Primary: 60E05; Secondary: 33C20, 39B62, 44A10, 60E15, 60G15}

\date{***}

\commby{}

\begin{abstract}
This paper provides the first explicit formula for the expectation of the product of two disjoint principal minors of a Wishart random matrix, solving a part of a broader problem put forth by Samuel S.\ Wilks in 1934 in the \textit{Annals of Mathematics}. The proof makes crucial use of hypergeometric functions of matrix argument and their Laplace transforms. Additionally, a Wishart generalization of the Gaussian product inequality conjecture is formulated and a stronger quantitative version is proved to hold in the case of two minors.
\end{abstract}

\maketitle

\section{Introduction}\label{sec:intro}

The Wishart distribution was first derived in 1928 by the Scottish statistician John Wishart while studying the joint distribution of empirical covariances in multivariate normal populations; see \cite{doi:10.2307/2331939}. In its full generality, the Wishart distribution is a matrix-variate extension of the Gamma distribution which has been used in statistical analysis for decades to model linear dependence between multiple variables in various scientific disciplines and research areas. Its application spans from statistical inference, facilitating hypothesis testing and the construction of confidence regions for covariance matrices, to Bayesian statistics, where it shines as a conjugate prior for precision matrices of multivariate normal distributions.
It also proves useful in optimizing investment portfolios in finance through the estimation of the variability of asset returns and their dependence structure. This broad use underscores the Wishart distribution's great importance in multivariate analysis, providing deep insights into both theoretical and applied problems.

Let $\mathcal{S}_{++}^p$ denote the set of $p\times p$ real (symmetric) positive definite matrices. For arbitrary degree-of-freedom parameter $\alpha\in (p-1,\infty)$ and scale matrix $\Sigma\in \mathcal{S}_{++}^p$, the corresponding Wishart probability density function is defined for every $X\in \mathcal{S}_{++}^p$, relative to the Lebesgue measure $\rd X$ on $\mathcal{S}_{++}^p$, by
\[
f_{\alpha,\Sigma}(X) = \frac{|X|^{\alpha/2 - (p + 1)/2} \etr(-\Sigma^{-1} X/2)}{|2 \Sigma|^{\alpha/2} \Gamma_p(\alpha/2)},
\]
where $\mathrm{etr}(\cdot) \equiv \exp\{\mathrm{tr}(\cdot)\}$ stands for the exponential of the trace, $|\cdot|$ denotes the determinant, and the multivariate gamma function $\Gamma_p(\cdot)$ is defined in \eqref{eq:Gamma.m.int} below. If a random matrix $\mathfrak{X}$ follows this distribution, one writes $\mathfrak{X}\sim \mathcal{W}_p(\alpha,\Sigma)$.

A simple renormalization of the density $f_{\alpha,\Sigma}$ shows that the moment-generating function of $\mathfrak{X}\sim \smash{\mathcal{W}_p(\alpha,\Sigma)}$ is given, for every real symmetric matrix $T$ of size $p\times p$ such that $\Sigma^{-1} - 2T\in \mathcal{S}_{++}^p$, by
\begin{equation}\label{eq:Lt}
\EE\{\etr(T \mathfrak{X})\} = |I_p - 2 T \Sigma|^{-\alpha/2};
\end{equation}
see, e.g., Theorem~3.2.3 of \citet{MR652932}.

Despite the moment-generating function of the Wishart distribution being widely known, the problem of calculating the joint moments of disjoint principal minors of Wishart random matrices has remained unsolved for the past 90 years, ever since Samuel S.\ Wilks raised the issue in 1934 in the \textit{Annals of Mathematics}; see \citet[pp.~325--326]{MR1503165}. More specifically, if the random matrix $\mathfrak{X}$ is expressed in the form $(\mathfrak{X}_{ij})$, where for each $i,j\in \{1,\ldots,d\}$, the block $\mathfrak{X}_{ij}$ has size~$p_i \times p_j$, so that $p_1 + \dots + p_d = p$, then the problem consists of calculating, for every $\nu_1,\ldots,\nu_d\in [0,\infty)$, the expectation
\begin{equation}\label{eq:Wilks.problem}
\EE\!\left(\prod_{i=1}^d |\mathfrak{X}_{ii}|^{\nu_i}\right).
\end{equation}
This question, which Wilks described as ``extremely complicated,'' arose from his intensive research into the moment properties of various generalized statistics pertaining to multivariate linear regression and the analysis of variance; see \cite{doi:10.2307/2331979,MR1503165}.

The first result of this paper (Theorem~\ref{thm:1}) solves Wilks' problem completely in the case of two minors ($d = 2$) by providing the first explicit formula for the expectation in \eqref{eq:Wilks.problem} since Wilks posed the problem in 1934. The derived formula involves the Gaussian hypergeometric function of matrix argument and holds true for a range of exponents wider than $[0,\infty)$.

Surprisingly, even in the much simpler setting of a centered multivariate normal random vector $\mathbf{Z} = (Z_1,\ldots,Z_d)$ with covariance matrix $\Sigma$, a general formula for the joint absolute moments
\begin{equation}\label{eq:Gaussian.moments}
\EE\!\left( \prod_{i=1}^d |Z_i|^{2 \nu_i} \right),
\end{equation}
expressed using known special functions, has only been solved in the bivariate case by \citet{MR45347}; the trivariate case is still open \citep{MR52072}. The most recent development in this direction is due to \citet{MR4219331}, where series representations are established for \eqref{eq:Gaussian.moments} in all dimensions, including in the noncentral case and under sectional truncations. Nevertheless, there is currently no unified framework in terms of special functions beyond two dimensions; see Corollary~3 of \cite{MR4219331}.

Other types of Wishart moments have been studied in the real case; see, e.g., \citet{MR181056} and \citet{MR4757806} for the joint moments of embedded principal minors of a Wishart random matrix, \citet{MR1868979} for the joint moments of monomials in the entries of a Wishart random matrix, \citet{MR2066255} for the expectations of polynomials of Wishart (and inverse Wishart) random matrices which are invariant under the conjugate action of the orthogonal group, \citet{MR2458187} for the first- and second-order moments of any two (i.e., not necessarily principal) minors of a Wishart random matrix, and \citet{doi:10.1007/s40953-021-00267-7,doi:10.1111/sjos.12707} for the expectations of matrix-valued functions of Wishart (and inverse Wishart) random matrices which are equivariant under the conjugate action of the orthogonal group.

On the one hand, the quest to obtain a general explicit formula for the expectation in \eqref{eq:Wilks.problem} is motivated by its many potential applications to statistical problems including: multivariate linear regression, the analysis of variance, calculation of the moments of generalized correlation coefficients, and the probability distributions of generalized Student's~$t$ ratios. On the other hand, the desire for a general explicit formula for the expectation in \eqref{eq:Wilks.problem} is fueled, in part, by its potential use in addressing the Wishart analog of the Gaussian product inequality (GPI) conjecture introduced by~\citet{arXiv:2311.00202}. The conjecture is stated below for convenience, along with a list of possible implications, raised as open questions.

\begin{conjecture}\label{conj:1}
Let $\mathfrak{X}\sim \mathcal{W}_p(\alpha,\Sigma)$ for given $p\in \N$, $\alpha\in (p-1,\infty)$ and $\Sigma\in \mathcal{S}_{++}^p$. Then, for every $\nu_1,\ldots,\nu_d\in [0,\infty)$,
\[
\EE\!\left(\prod_{i=1}^d |\mathfrak{X}_{ii}|^{\nu_i}\right) \geq \prod_{i=1}^d \EE(|\mathfrak{X}_{ii}|^{\nu_i}).
\]
\end{conjecture}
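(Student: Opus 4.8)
The plan is to concentrate on the case $d = 2$, where the explicit formula of Theorem~\ref{thm:1} can be brought to bear; the general case, for which no closed form is available, I will address only at the end, as the principal obstruction. Write $\mathfrak{X} \sim \mathcal{W}_p(\alpha, \Sigma)$ partitioned into diagonal blocks of sizes $p_1$ and $p_2$ with $p_1 + p_2 = p$, and recall the standard fact that each diagonal block is marginally Wishart, $\mathfrak{X}_{ii} \sim \mathcal{W}_{p_i}(\alpha, \Sigma_{ii})$. First I would evaluate the right-hand side of Conjecture~\ref{conj:1} by means of the classical determinant-moment identity $\EE(|\mathfrak{X}_{ii}|^{\nu_i}) = 2^{p_i \nu_i} |\Sigma_{ii}|^{\nu_i} \Gamma_{p_i}(\alpha/2 + \nu_i)/\Gamma_{p_i}(\alpha/2)$, which is obtained by renormalizing the density in the same spirit as \eqref{eq:Lt}. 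The crux is then to show that the closed form supplied by Theorem~\ref{thm:1} for the left-hand side factors exactly as this product times a Gaussian hypergeometric function ${}_2F_1$ of matrix argument, with numerator parameters $-\nu_1, -\nu_2$, denominator parameter governed by $\alpha$, and argument equal to the canonical-correlation matrix $R = \Sigma_{11}^{-1} \Sigma_{12} \Sigma_{22}^{-1} \Sigma_{21}$. Since $R$ is positive semidefinite with eigenvalues (the squared canonical correlations between the two blocks) lying in $[0,1)$, the conjecture for $d = 2$ reduces to the single scalar assertion that this matrix-argument ${}_2F_1$ is at least $1$.

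The next step is to analyze this factor through its expansion in zonal polynomials $C_\kappa(R)$. Its constant term is $1$, attained precisely when $R = 0$, i.e.\ when the two blocks are uncorrelated and the inequality collapses to an equality; it therefore suffices to prove that the sum of all remaining terms is nonnegative. By orthogonal invariance I would diagonalize $R$, reducing the matrix series to a symmetric series in the squared canonical correlations, and I would first settle the scalar specialization $p_1 = p_2 = 1$ — a bivariate-gamma inequality of the form ${}_2F_1(-\nu_1, -\nu_2; \alpha/2; \rho^2) \geq 1$ with $\rho^2 = \Sigma_{12}^2/(\Sigma_{11}\Sigma_{22})$ — to serve as a template before contending with the generalized Pochhammer symbols $(-\nu_i)_\kappa$ of the full matrix series.

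The main obstacle is the sign of the coefficients. Because the numerator parameters $-\nu_1$ and $-\nu_2$ are nonpositive, the Pochhammer products $(-\nu_1)_k(-\nu_2)_k$ oscillate in sign whenever the exponents are not integers, so term-by-term positivity fails and one cannot simply read off that the series exceeds $1$. To surmount this I would attempt either (i) to group the terms in consecutive pairs and show each pair is nonnegative, exploiting that $(-\nu_1)_k$ and $(-\nu_2)_k$ follow the same sign pattern in $k$, or (ii) to invoke an Euler-type integral representation of the matrix ${}_2F_1$ and compare its integrand with the value at $R = 0$, provided the argument enters with the sign that makes the integrand dominate that value. Verifying the nonnegativity of the remainder beyond the constant term is where essentially all of the difficulty resides, since it carries the entire content of the inequality.

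The same expansion yields the promised quantitative refinement: isolating the first nontrivial term, whose scalar factor $(-\nu_1)(-\nu_2) = \nu_1 \nu_2$ is manifestly nonnegative, and bounding the higher-order remainder by the argument above would give a strict improvement of the form $\EE(|\mathfrak{X}_{11}|^{\nu_1} |\mathfrak{X}_{22}|^{\nu_2}) \geq (1 + c\,\nu_1\nu_2\,\mathrm{tr}(R)) \prod_{i=1}^2 \EE(|\mathfrak{X}_{ii}|^{\nu_i})$ for an explicit constant $c > 0$. For the full conjecture with $d \geq 3$, where Theorem~\ref{thm:1} provides no analogue, I would instead try to induct on $d$ by conditioning on a single diagonal block and using the conditional Wishart law of the corresponding Schur complement; the difficulty there is that conditioning introduces a noncentral Wishart term whose determinant moments reintroduce, in a more entangled form, the very positivity question that already blocks the two-block case.
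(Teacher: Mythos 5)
Your reduction is the right one, and it matches the paper's: the statement is genuinely a conjecture for $d\geq 3$ (the paper proves only the two-minor case, in Corollary~\ref{cor:1}), and for $d=2$ Theorem~\ref{thm:1} reduces everything to the scalar assertion ${}_2F_1(-\nu_1,-\nu_2;\alpha/2;M)\geq 1$. Your identification of the argument with the canonical-correlation matrix is also fine, since $A_{11}^{-1}A_{12}A_{22}^{-1}A_{21}$ is similar to $\Sigma_{11}^{-1}\Sigma_{12}\Sigma_{22}^{-1}\Sigma_{21}$ and the hypergeometric function depends only on eigenvalues. You likewise correctly locate the obstruction: the coefficients $(-\nu_1)_{\bb{\kappa}}(-\nu_2)_{\bb{\kappa}}$ oscillate in sign for non-integer exponents. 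But neither of your two proposed ways around it works, and this step, as you say yourself, carries the entire content of the inequality. Option (i) rests on the claim that $(-\nu_1)_k$ and $(-\nu_2)_k$ follow the same sign pattern in $k$; this is false in general: for $\nu_1=1/2$, $\nu_2=3/2$ one has $(-\nu_1)_k<0$ for all $k\geq 1$ while $(-\nu_2)_k>0$ for all $k\geq 2$, so every term of the series with $k\geq 2$ is negative, and no pairing of consecutive terms can restore positivity -- the negative tail must be compared against the early positive terms. Option (ii) is blocked by the parameter constraints of the Euler integral representation of the matrix-argument ${}_2F_1(a,b;c;M)$, which requires both the integral parameter and its complement (e.g.\ $a$ and $c-a$) to exceed $(m-1)/2$; with $a=-\nu_1\leq 0$ this fails, and it fails again (via $c-a$) if one first passes to positive numerator parameters.

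The missing idea -- and the paper's actual proof of Corollary~\ref{cor:1} -- is Herz's generalization of Euler's transformation, ${}_2F_1(-\nu_1,-\nu_2;c;M)=|I_m-M|^{c+\nu_1+\nu_2}\,{}_2F_1(c+\nu_1,c+\nu_2;c;M)$, which converts the oscillating series into one with all nonnegative terms; note that the proof of Theorem~\ref{thm:1} produces exactly this positive-parameter form before Euler's transformation is applied, so your outline in effect undoes the form you need. Positivity alone is still not enough, because the prefactor $|I_m-M|^{c+\nu_1+\nu_2}<1$ must be beaten. The paper does this with the elementary identity $(z+x)(z+y)=z(z+x+y)+xy$, which yields the rising-factorial inequality $(c+a)_{\bb{\kappa}}(c+b)_{\bb{\kappa}}\geq (c)_{\bb{\kappa}}(c+a+b)_{\bb{\kappa}}+(ab)^{|\bb{\kappa}|}$, hence ${}_2F_1(c+a,c+b;c;M)\geq {}_1F_0(c+a+b;M)+{}_0F_1(c;abM)-1$; the term ${}_1F_0(c+a+b;M)=|I_m-M|^{-(c+a+b)}$ exactly cancels the determinant prefactor and gives ${}_2F_1(-a,-b;c;M)\geq 1+|I_m-M|^{c+a+b}\{{}_0F_1(c;abM)-1\}\geq 1$, which is also a sharper quantitative refinement than the $1+c\,\nu_1\nu_2\,\mathrm{tr}(R)$ form you anticipated. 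Without this (or an equivalent) device, your outline establishes the inequality only in the trivial cases $\nu_1\nu_2=0$ and nonnegative integer exponents, where term-by-term positivity already holds.
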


\begin{enumerate}[\quad1.]
\item Can the conjecture help in deriving explicit bounds for the power function of the likelihood ratio test statistic \citep[Chapter~11]{MR652932} for testing independence among $d$ sets of variables with a joint multivariate normal distribution?
\item Given that Gaussian moments are connected to algebraic concepts such as permanents and Hafnians through Wick's formula, is a similar relationship true for Wishart moments? Could the conjecture help in discovering new algebraic inequalities? Notably, for $1\times 1$ determinants (i.e., the multivariate gamma distribution), moments are associated with $\alpha$-permanents \citep{MR1450811}.
\item Is there a Wishart analog to the real linear polarization constant conjecture \citep{MR1636556,MR3425898} in functional analysis?
\item The proof of the Gaussian correlation inequality \citep{MR3289621} is more straightforward in the framework of the multivariate gamma distribution. Could the GPI also become simpler when considered in the context of the Wishart or multivariate gamma distributions?
\end{enumerate}

The second result of this paper (Corollary~\ref{cor:1}) shows that a stronger quantitative version of Conjecture~\ref{conj:1} holds true when there are two minors ($d = 2$).

The GPI conjecture itself states that, for every $\nu_1, \ldots, \nu_d\in [0,\infty)$,
\begin{equation}\label{eq:GPI}
\mathrm{GPI}_d(\bb{\nu}) : \quad \EE\!\left( \prod_{i=1}^d |Z_i|^{2 \nu_i} \right) \geq \prod_{i=1}^d \EE(|Z_i|^{2 \nu_i}).
\end{equation}
In the special case $\nu_1 = \dots = \nu_d\in \N_0 = \{0, 1, \ldots\}$, this inequality corresponds to the real linear polarization constant problem in functional analysis, initially posed by \citet{MR1636556} and later recontextualized in the above Gaussian framework by \citet{MR2385646}. The complex version of the linear polarization constant problem, being simpler, was established by \citet{MR1653495}.

The GPI has been extensively studied, yielding several significant partial results. It is known that $\mathrm{GPI}_2(\nu_1, \nu_2)$ holds for all $(\nu_1, \nu_2) \in [0,\infty)^2$ due to the MTP$_2$ property of $(|Z_1|, |Z_2|)$, as shown by \citet{MR628759}. Furthermore, $\mathrm{GPI}_d(1, \dots, 1)$ is satisfied for all $d \in \mathbb{N}$, as established by \citet{MR2385646}, combining Wick's formula with properties of permanents and Hafnians.

\citet{MR4052574} proved through an analysis involving the Gaussian hypergeometric function that $\mathrm{GPI}_3(m, m, n)$ holds. \citet{MR4445681,MR4760098} validated the GPI for specific tuples in dimensions $d \in \{3, 4, 5\}$ using combinatorial and computer-assisted sums-of-squares approaches. Those results were extended subsequently by \citet{MR4661091}, who used mathematical induction, Lagrange multipliers, and Gaussian integration-by-parts to establish that $\mathrm{GPI}_3(\bb{n})$ holds for all $\bb{n}\in \smash{\N_0^3}$. Using a similar induction approach, \citet{MR4794515} extended \cite{MR4661091} to include positive half-integers.

The GPI has also been investigated under specific assumptions on the covariance matrix of $\bb{Z} = (Z_1,\ldots,Z_d)$, denoted $\Sigma$. \citet{MR4466643} showed that $\mathrm{GPI}_d(\bb{n})$ holds for any $d \in \mathbb{N}$ and $\bb{n} \in \smash{\mathbb{N}_0^d}$ when $\Sigma$ is completely positive, using a combinatorial method related to the log-convexity of the gamma function. \citet{MR4445681} proved that $\mathrm{GPI}_d(\bb{n})$ holds under the weaker assumption that $\Sigma$ is entry-wise nonnegative, employing an Isserlis–Wick type formula. \citet{MR4554766} extended the latter to the case where the $Z_i^2$'s are replaced by the components of a multivariate gamma random vector.

Another direction explored in the literature involves replacing the positive powers in \eqref{eq:GPI} with other functions of $Z_i^2$ (or functions of the components of more general random vectors) and showing that the inequality holds. The case of Hermite polynomials of Gaussian components was considered by \citet{MR3425898}. Also in the Gaussian setting, the case of integrals of cosine functions was treated by \citet{MR3608204}; see \cite{MR4538422} for several functional and distributional generalizations. The case of negative powers was solved entirely by \citet{MR3278931} in the multivariate gamma setting and was extended to the Wishart setting in \cite{MR4538422,arXiv:2311.00202}, where it was also shown that negative powers can be replaced more generally by completely monotone functions. The case of mixed exponents (positive and negative powers) was studied with an increasing level of generality in \cite{MR4471184}, \cite{MR4530374}, \cite{MR4666255}, and most recently in \cite{arXiv:2311.00202}. Refer to \cite{arXiv:2311.00202} for a more comprehensive summary of facts related to the GPI conjecture.

Here is a brief outline of the present paper. In Section~\ref{sec:definitions}, some definitions and notations are introduced. The two results are stated in Section~\ref{sec:main.results} and proved in Section~\ref{sec:proofs}. Numerical validation of the results is provided in Appendix~\ref{app:numerical.validation}. Two technical lemmas used in the proofs are relegated to Appendix~\ref{app:tech.lemmas}.

\section{Definitions}\label{sec:definitions}

For any positive integer $m \in \N = \{1,2,\ldots\}$, the multivariate gamma function~$\Gamma_m$ is defined, for any $\beta \in \C$ such that $\textrm{Re}(\beta) > (m-1)/2$, by
\begin{equation}\label{eq:Gamma.m.int}
\Gamma_m(\beta) = \int_{\mathcal{S}_{++}^m} \etr(- X) \, |X|^{\beta - (m + 1)/2} \rd X,
\end{equation}
which is a natural generalization to the cone $\mathcal{S}_{++}^m$ of the classical gamma function. It is well known \cite[Theorem~2.1.12]{MR652932} that this integral has the exact evaluation
\begin{equation}\label{eq:Gamma.m.prod}
\Gamma_m(\beta) = \pi^{m(m-1)/4} \prod_{j=1}^m \Gamma ( \beta - (j - 1)/2 ).
\end{equation}

For any nonnegative integer $k$, a \textit{partition} $\bb{\kappa} = (k_1,\ldots,k_m)$ of $k \equiv |\bb{\kappa}|$ is a vector of nonnegative integers such that $k_1 \geq \dots \geq k_m$ and $k_1 + \dots + k_m = k$. Following \citet[Definition~7.2.1]{MR652932}, there corresponds a \textit{zonal polynomial} $C_{\bb{\kappa}}(M)$ for each partition $\bb{\kappa}$ and complex symmetric matrix $M$ of size $m \times m$. For a comprehensive introduction to zonal polynomials, refer, e.g., to \cite{MR882715, MR744672}.

Define the classical rising factorial, $(a)_k = \prod_{i=0}^{k-1} (a + i)$, with $(a)_0 = 1$. Further, define the partitional rising factorial $(a)_{\bb{\kappa}}$ by
\begin{equation}\label{eq:partitional.rising}
(a)_{\bb{\kappa}} = \prod_{j=1}^m (a - (j-1)/2)_{k_j}.
\end{equation}
Then one follows \cite[Definition~7.3.1]{MR652932} to define the generalized hypergeometric function of matrix argument ${}_pF_q$, for all $p,q\in \N_0$ and $a_1,\ldots,a_p,b_1,\ldots,b_q\in \C$, by
\begin{equation}\label{eq:hgfoma}
{}_pF_q(a_1,\ldots,a_p;b_1,\ldots,b_q;M) = \sum_{k=0}^{\infty} \sum_{\bb{\kappa}} \frac{(a_1)_{\bb{\kappa}} \cdots (a_p)_{\bb{\kappa}}}{(b_1)_{\bb{\kappa}} \cdots (b_q)_{\bb{\kappa}}} \frac{C_{\bb{\kappa}}(M)}{k!},
\end{equation}
where $M$ is an $m\times m$ complex symmetric matrix, the inner sum $\sum_{\bb{\kappa}}$ denotes a summation over all partitions $\bb{\kappa} = (k_1,\ldots,k_m)$ of $k$, $C_{\bb{\kappa}}(M)$ is the zonal polynomial of $M$ corresponding to $\bb{\kappa}$, and the complex numbers $b_1,\ldots,b_q$ are such that $(b_j)_{\bb{\kappa}} \neq 0$ for all $\bb{\kappa}$ and all $j\in \{1,\ldots,q\}$. The convergence properties of the zonal polynomial series \eqref{eq:hgfoma} are provided in Theorem~6.3 of ~\citet{MR882715}. In particular, in the case of the Gaussian hypergeometric function of matrix argument, for which $(p,q)=(2,1)$, the series \eqref{eq:hgfoma} converges absolutely for all $m\times m$ complex symmetric matrices $M$ such that $\|M\| < 1$, where $\|\cdot\|$ denotes the spectral norm.

\section{Results}\label{sec:main.results}

The theorem below presents a general formula for the expectation of the product of two disjoint principal minors of a Wishart random matrix. A further generalization is also stated with the additional factor $\etr(T \mathfrak{X}) |\mathfrak{X}|^{\nu_0}$ in the expectation, which renormalizes the Wishart measure and offers more flexibility.

\begin{theorem}\label{thm:1}
For any $d, p_1, p_2 \in \N$, let $p = p_1 + p_2$ with $p_1 \leq p_2$, and fix any degree-of-freedom $\alpha \in (p-1, \infty)$. Let $\Sigma\in \mathcal{S}_{++}^p$ be any $p \times p$ positive definite matrix expressed in the form $(\Sigma_{ij})$, where for each $i,j\in \{1,2\}$, the block $\Sigma_{ij}$ has size~$p_i \times p_j$. Denote $A = \Sigma^{-1}/2$ and $\smash{P = A_{11}^{-1/2} A_{12} A_{22}^{-1/2}}$. Suppose that
\[
\mathfrak{X} =
\begin{bmatrix}
\mathfrak{X}_{11} & \mathfrak{X}_{12} \\
\mathfrak{X}_{21} & \mathfrak{X}_{22} \\
\end{bmatrix}
\sim \mathcal{W}_p(\alpha,\Sigma), \quad \text{with}~~~
\Sigma =
\begin{bmatrix}
\Sigma_{11} & \Sigma_{12} \\
\Sigma_{21} & \Sigma_{22} \\
\end{bmatrix}.
\]
Then, for every real $\nu_1 > -\alpha/2 + (p_1 - 1)/2$ and $\nu_2 > -\alpha/2 + (p_2 - 1)/2$, one has
\begin{equation}\label{thm:1.eq.1}
\begin{aligned}
\EE(|\mathfrak{X}_{11}|^{\nu_1} |\mathfrak{X}_{22}|^{\nu_2})
&= |2 \Sigma_{11}|^{\nu_1} \frac{\Gamma_{p_1}(\alpha/2 + \nu_1)}{\Gamma_{p_1}(\alpha/2)} \, |2 \Sigma_{22}|^{\nu_2} \frac{\Gamma_{p_2}(\alpha/2 + \nu_2)}{\Gamma_{p_2}(\alpha/2)} \\[-0.5mm]
&\quad\times {}_2F_1(-\nu_1, -\nu_2; \alpha/2; P P^{\top}) \\
&= \EE(|\mathfrak{X}_{11}|^{\nu_1}) \, \EE(|\mathfrak{X}_{22}|^{\nu_2}) \, {}_2F_1(-\nu_1, -\nu_2; \alpha/2; P P^{\top}).
\end{aligned}
\end{equation}
More generally, for any $p\times p$ real symmetric matrix $T$ such that $\Sigma^{-1} - 2T\in \mathcal{S}_{++}^p$, consider
\[
\mathfrak{X}_T \sim \mathcal{W}_p\{\alpha + 2\nu_0, (\Sigma^{-1} - 2T)^{-1}\}.
\]
Further, let $A_T = \Sigma^{-1}/2 - T$ and $P_T= (A_T)_{11}^{-1/2} (A_T)_{12} (A_T)_{22}^{-1/2}$. Then, for every $\nu_0 > -\alpha/2 + (p - 1)/2$, $\nu_1 > -\alpha/2 - \nu_0 + (p_1 - 1)/2$, $\nu_2 > -\alpha/2 - \nu_0 + (p_2 - 1)/2$, one has
\begin{equation}\label{thm:1.eq.2}
\begin{aligned}
&\EE\{\etr(T \mathfrak{X}) \, |\mathfrak{X}|^{\nu_0} |\mathfrak{X}_{11}|^{\nu_1} |\mathfrak{X}_{22}|^{\nu_2}\} \\
&\quad= \EE\{\etr(T \mathfrak{X})\} \, \EE(|\mathfrak{X}_T|^{-\nu_0})^{-1} \, \EE\{|(\mathfrak{X}_T)_{11}|^{\nu_1}\} \, \EE\{|(\mathfrak{X}_T)_{22}|^{\nu_2}\} \\
&\qquad\times {}_2F_1(-\nu_1, -\nu_2; \alpha_0/2; P_T P_T^{\top}).
\end{aligned}
\end{equation}
\end{theorem}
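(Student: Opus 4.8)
The plan is to establish the unweighted identity \eqref{thm:1.eq.1} by conditioning on the smaller block $\mathfrak{X}_{11}$, and then to deduce the weighted identity \eqref{thm:1.eq.2} from it by a change-of-measure (tilting) argument. For \eqref{thm:1.eq.1}, I would condition on $\mathfrak{X}_{11}$ and use the standard Wishart decomposition: with $\mathfrak{X}_{22\cdot1}=\mathfrak{X}_{22}-\mathfrak{X}_{21}\mathfrak{X}_{11}^{-1}\mathfrak{X}_{12}$ and $\Sigma_{22\cdot1}=\Sigma_{22}-\Sigma_{21}\Sigma_{11}^{-1}\Sigma_{12}$, one has $\mathfrak{X}_{11}\sim\mathcal{W}_{p_1}(\alpha,\Sigma_{11})$, $\mathfrak{X}_{22\cdot1}\sim\mathcal{W}_{p_2}(\alpha-p_1,\Sigma_{22\cdot1})$ independent of $(\mathfrak{X}_{11},\mathfrak{X}_{12})$, and $\mathfrak{X}_{12}\mid\mathfrak{X}_{11}$ matrix-normal with mean $\mathfrak{X}_{11}\Sigma_{11}^{-1}\Sigma_{12}$ and covariance $\mathfrak{X}_{11}\otimes\Sigma_{22\cdot1}$. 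Since $\mathfrak{X}_{21}\mathfrak{X}_{11}^{-1}\mathfrak{X}_{12}=(\mathfrak{X}_{11}^{-1/2}\mathfrak{X}_{12})^{\top}(\mathfrak{X}_{11}^{-1/2}\mathfrak{X}_{12})$ is, given $\mathfrak{X}_{11}$, a noncentral Wishart with $p_1$ degrees of freedom, the conditional law of $\mathfrak{X}_{22}$ is the \emph{noncentral} Wishart
\[
\mathfrak{X}_{22}\mid\mathfrak{X}_{11}\sim\mathcal{W}_{p_2}(\alpha,\Sigma_{22\cdot1},\Theta),\qquad\Theta=\Sigma_{22\cdot1}^{-1}\Sigma_{21}\Sigma_{11}^{-1}\mathfrak{X}_{11}\Sigma_{11}^{-1}\Sigma_{12},
\]
with the \emph{full} $\alpha$ degrees of freedom, because $(\alpha-p_1)+p_1=\alpha$. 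This is the crucial point, as it is what makes the parameter $\alpha/2$---rather than $(\alpha-p_1)/2$---appear in the final hypergeometric function.

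Next I would invoke the noncentral-Wishart determinant moment (a first technical lemma): for $\mathfrak{W}\sim\mathcal{W}_{p_2}(\alpha,\Sigma,\Theta)$ one has $\EE(|\mathfrak{W}|^{\nu})=|2\Sigma|^{\nu}\frac{\Gamma_{p_2}(\alpha/2+\nu)}{\Gamma_{p_2}(\alpha/2)}\,{}_1F_1(-\nu;\alpha/2;-\Theta/2)$. This expresses $\EE(|\mathfrak{X}_{22}|^{\nu_2}\mid\mathfrak{X}_{11})$ as a constant times ${}_1F_1(-\nu_2;\alpha/2;-\Theta/2)$, whose argument is linear in $\mathfrak{X}_{11}$. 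Taking the outer expectation leaves the integral of $|\mathfrak{X}_{11}|^{\nu_1}\,{}_1F_1(-\nu_2;\alpha/2;-\Theta/2)$ against the $\mathcal{W}_{p_1}(\alpha,\Sigma_{11})$ density. Because ${}_1F_1$ is a symmetric function of the eigenvalues of its argument, I would replace $-\Theta/2$ (of rank $\le p_1$) by the equivalent $p_1\times p_1$ matrix $-\tfrac12\tilde K^{1/2}(\Sigma_{11}^{-1/2}\mathfrak{X}_{11}\Sigma_{11}^{-1/2})\tilde K^{1/2}$, where $\tilde K=\Sigma_{11}^{-1/2}\Sigma_{12}\Sigma_{22\cdot1}^{-1}\Sigma_{21}\Sigma_{11}^{-1/2}$, and then apply the Laplace transform $\int|Y|^{a-(p_1+1)/2}\etr(-ZY)\,{}_1F_1(b;c;TY)\rd Y=\Gamma_{p_1}(a)|Z|^{-a}\,{}_2F_1(a,b;c;TZ^{-1})$ (a second technical lemma) with $a=\alpha/2+\nu_1$. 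Up to constants this yields ${}_2F_1(\alpha/2+\nu_1,-\nu_2;\alpha/2;-\tilde K)$.

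The main obstacle is to reconcile this with the stated form, and here two difficulties must be handled at once. First, ${}_2F_1(\alpha/2+\nu_1,-\nu_2;\alpha/2;-\tilde K)$ need not converge, since the eigenvalues of $\tilde K$ may exceed $1$. Both the parameters and the convergence are repaired by Pfaff's transformation for the matrix-argument hypergeometric function,
\[
{}_2F_1(\alpha/2+\nu_1,-\nu_2;\alpha/2;-\tilde K)=|I+\tilde K|^{\nu_2}\,{}_2F_1(-\nu_1,-\nu_2;\alpha/2;\tilde K(I+\tilde K)^{-1}),
\]
whose argument $\tilde K(I+\tilde K)^{-1}$ has all eigenvalues in $[0,1)$. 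It then remains to verify two algebraic block identities. Writing $G=\Sigma_{11}^{-1}\Sigma_{12}\Sigma_{22\cdot1}^{-1}\Sigma_{21}$ and using $(\Sigma^{-1})_{12}=-\Sigma_{11}^{-1}\Sigma_{12}\Sigma_{22\cdot1}^{-1}$ together with $\Sigma_{11}^{-1}\Sigma_{11\cdot2}=(I+G)^{-1}$, one checks that $PP^{\top}$ and $\tilde K(I+\tilde K)^{-1}$ share the common eigenvalues $\lambda_i/(1+\lambda_i)$, where the $\lambda_i$ are the eigenvalues of $G$; hence the two ${}_2F_1$'s coincide. Finally $|I+\tilde K|=|\Sigma_{22}|/|\Sigma_{22\cdot1}|$ by Sylvester's identity, so the factor $|I+\tilde K|^{\nu_2}$ upgrades the conditional scale $|2\Sigma_{22\cdot1}|^{\nu_2}$ to $|2\Sigma_{22}|^{\nu_2}$ and the remaining constants collapse into the product $\EE(|\mathfrak{X}_{11}|^{\nu_1})\,\EE(|\mathfrak{X}_{22}|^{\nu_2})$, giving \eqref{thm:1.eq.1}. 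Throughout, the termwise interchange of the zonal-polynomial series with the integrations is justified first on the parameter range where every series converges absolutely and then extended by analytic continuation in $(\nu_1,\nu_2)$.

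The weighted identity \eqref{thm:1.eq.2} follows by tilting. Multiplying the $\mathcal{W}_p(\alpha,\Sigma)$ density by $\etr(TX)\,|X|^{\nu_0}$ turns $|X|^{\alpha/2-(p+1)/2}\etr(-AX)$ into $|X|^{\alpha_0/2-(p+1)/2}\etr(-A_TX)$, with $\alpha_0=\alpha+2\nu_0$ and $A_T=A-T$, i.e.\ into a multiple of the density of $\mathfrak{X}_T\sim\mathcal{W}_p(\alpha_0,(2A_T)^{-1})$. Comparing normalizing constants through the Gamma integral $\int|X|^{\beta-(p+1)/2}\etr(-AX)\rd X=|A|^{-\beta}\Gamma_p(\beta)$ gives
\[
\EE\{\etr(T\mathfrak{X})\,|\mathfrak{X}|^{\nu_0}|\mathfrak{X}_{11}|^{\nu_1}|\mathfrak{X}_{22}|^{\nu_2}\}=\frac{|A_T|^{-\alpha_0/2}\Gamma_p(\alpha_0/2)}{|A|^{-\alpha/2}\Gamma_p(\alpha/2)}\,\EE\{|(\mathfrak{X}_T)_{11}|^{\nu_1}|(\mathfrak{X}_T)_{22}|^{\nu_2}\}.
\]
Applying \eqref{thm:1.eq.1} to $\mathfrak{X}_T$ evaluates the last expectation, and the prefactor equals $\EE\{\etr(T\mathfrak{X})\}\,\EE(|\mathfrak{X}_T|^{-\nu_0})^{-1}$: indeed, the moment-generating function \eqref{eq:Lt} gives $\EE\{\etr(T\mathfrak{X})\}=|I_p-2T\Sigma|^{-\alpha/2}=|A_T|^{-\alpha/2}|A|^{\alpha/2}$, while the determinant moment of $\mathfrak{X}_T$, combined with the collapse $\alpha_0/2-\nu_0=\alpha/2$, gives $\EE(|\mathfrak{X}_T|^{-\nu_0})^{-1}=|A_T|^{-\nu_0}\Gamma_p(\alpha_0/2)/\Gamma_p(\alpha/2)$. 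Their product is exactly the prefactor above, which establishes \eqref{thm:1.eq.2}.
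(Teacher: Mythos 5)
Your proposal is correct, but for the main identity \eqref{thm:1.eq.1} it takes a genuinely different route from the paper. You condition on $\mathfrak{X}_{11}$ and lean on distribution theory: the Wishart block decomposition makes $\mathfrak{X}_{22}$ given $\mathfrak{X}_{11}$ a noncentral Wishart with the full $\alpha$ degrees of freedom (the central $\mathcal{W}_{p_2}(\alpha-p_1,\Sigma_{22\cdot 1})$ part convolved with the rank-$p_1$ noncentral part), whose determinant moment is a ${}_1F_1(-\nu_2;\alpha/2;\cdot)$; a single Laplace transform over $\mathfrak{X}_{11}$ then yields ${}_2F_1(\alpha/2+\nu_1,-\nu_2;\alpha/2;-\tilde K)$, which Pfaff's transformation and the eigenvalue identification of $\tilde K(I+\tilde K)^{-1}$ with $PP^{\top}$ convert into the stated form. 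I verified your algebraic claims: $\Sigma_{11\cdot 2}^{-1}\Sigma_{11}=I+G$ does give the common eigenvalues $\lambda_i/(1+\lambda_i)$, $|I+\tilde K|=|\Sigma_{22}|/|\Sigma_{22\cdot 1}|$ does upgrade the conditional scale, and your two quoted lemmas (the noncentral determinant moment and the Laplace-transform recurrence) are correct as stated. The paper never conditions: it integrates the off-diagonal block $X_{12}$ out of the joint density via Eq.~(151) of \citet{MR181057} --- this is precisely where the hypothesis $p_1\le p_2$ is used --- producing a ${}_0F_1$, then applies Herz's recurrence \cite[Eq.~(2.1$'$)]{MR69960} twice to reach ${}_2F_1(\alpha_1/2,\alpha_2/2;\alpha/2;PP^{\top})$, and finishes with Euler's transformation \cite[Eq.~(6.3)]{MR69960} rather than Pfaff's. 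The trade-off: the paper's route stays inside convergent zonal series throughout, since $\|PP^{\top}\|<1$, and needs only central-Wishart machinery; your route is shorter given the lemmas you invoke and is symmetric in $p_1,p_2$, but it passes through a ${}_2F_1$ evaluated at $-\tilde K$, whose spectral norm can exceed $1$, so the Laplace-transform step must be read in Herz's analytic-continuation sense (or established first for $\nu_2\in\N_0$, where the ${}_1F_1$ terminates, and then extended by a Carlson-type continuation argument in $\nu_2$); you flag this correctly, and Pfaff's relation does hold for the continued functions, but that is the one step that needs a precise citation rather than a wave at ``analytic continuation in $(\nu_1,\nu_2)$.'' Your derivation of \eqref{thm:1.eq.2} by exponential-determinantal tilting, including the identification of the prefactor with $\EE\{\etr(T\mathfrak{X})\}\,\EE(|\mathfrak{X}_T|^{-\nu_0})^{-1}$, is essentially identical to the paper's.
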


\begin{remark}
In Theorem~\ref{thm:1}, the ranges for $\nu_0,\nu_1,\nu_2$ are the largest possible that result in all expectations and integrals in the proof being well-defined and finite.
\end{remark}

\begin{remark}
For the case in which $p_1 > p_2$, one can easily obtain moment formulas analogous to those of Theorem~\ref{thm:1} by applying the transformation
\[
\mathfrak{X}(\Pi)
= \Pi \, \mathfrak{X} \, \Pi^{\top}
=
\begin{bmatrix}
\mathfrak{X}_{22} & \mathfrak{X}_{21} \\
\mathfrak{X}_{12} & \mathfrak{X}_{11}
\end{bmatrix},
\quad \text{with}~~~
\Pi =
\begin{bmatrix}
0_{p_2 \times p_1} & I_{p_2} \\
I_{p_1} & 0_{p_1 \times p_2}
\end{bmatrix}.
\]
This transformation leads to $\mathfrak{X}(\Pi) \sim \mathcal{W}_p\{\alpha,\Sigma(\Pi)\}$, where $\Sigma(\Pi) = \Pi \, \Sigma \, \Pi^{\top}$. As a result, if $A(\Pi)$ and $P(\Pi)$ correspond to the matrices $A$ and $P$ after the transformation, then $A(\Pi) = (\Pi \, \Sigma \, \Pi^{\top})^{-1} / 2 = \Pi \, A \, \Pi^{\top}$, and consequently, $\smash{P(\Pi) = A_{22}^{-1/2} A_{21} A_{11}^{-1/2}}$. Therefore, if $p_1 > p_2$, one finds
\[
\EE(|\mathfrak{X}_{11}|^{\nu_1} |\mathfrak{X}_{22}|^{\nu_2})
= \EE(|\mathfrak{X}_{11}|^{\nu_1}) \, \EE(|\mathfrak{X}_{22}|^{\nu_2}) \, {}_2F_1\{-\nu_1, -\nu_2; \alpha/2; P(\Pi) P(\Pi)^{\top}\}.
\]
The formula that would replace \eqref{thm:1.eq.2} is completely analogous.
\end{remark}

The corollary below establishes a stronger quantitative version of Conjecture~\ref{conj:1} when there are two minors ($d = 2$). The proof relies on Theorem~\ref{thm:1} and a certain lower bound on the Gaussian hypergeometric function of matrix argument ${}_2F_1$.

\begin{corollary}\label{cor:1}
Suppose that the assumptions of Theorem~\ref{thm:1} hold. Then, for every real symmetric matrix $T$ of size $p\times p$ such that $\Sigma^{-1} - 2T\in \mathcal{S}_{++}^p$, and every real $\nu_0 > -\alpha/2 + (p - 1)/2$, and $\nu_1,\nu_2\in [0,\infty)$, one has
\[
\begin{aligned}
&\EE\{\etr(T \mathfrak{X}) \, |\mathfrak{X}|^{\nu_0} |\mathfrak{X}_{11}|^{\nu_1} |\mathfrak{X}_{22}|^{\nu_2}\} \\
&\quad\geq \EE\{\etr(T \mathfrak{X})\} \, \EE(|\mathfrak{X}_T|^{-\nu_0})^{-1} \, \EE\{|(\mathfrak{X}_T)_{11}|^{\nu_1}\} \, \EE\{|(\mathfrak{X}_T)_{22}|^{\nu_2}\} \\
&\qquad\times \big[1 + |I_{p_1} - P_T P_T^{\top}|^{\alpha_0/2 + \nu_1 + \nu_2} \, \{{}_0F_1(\alpha_0/2; \nu_1 \nu_2 P_T P_T^{\top}) - 1\}\big] \\
&\quad\geq \EE\{\etr(T \mathfrak{X})\} \, \EE(|\mathfrak{X}_T|^{-\nu_0})^{-1} \, \EE\{|(\mathfrak{X}_T)_{11}|^{\nu_1}\} \, \EE\{|(\mathfrak{X}_T)_{22}|^{\nu_2}\}.
\end{aligned}
\]
In particular, if $T = 0_{p\times p}$ and $\nu_0 = 0$, the above reduces to
\[
\begin{aligned}
\EE(|\mathfrak{X}_{11}|^{\nu_1} |\mathfrak{X}_{22}|^{\nu_2})
&\geq \EE(|\mathfrak{X}_{11}|^{\nu_1}) \, \EE(|\mathfrak{X}_{22}|^{\nu_2}) \\
&\quad\times \!\big[1 + |I_{p_1} - P P^{\top}|^{\alpha/2 + \nu_1 + \nu_2} \, \{{}_0F_1(\alpha/2; \nu_1 \nu_2 P P^{\top}) - 1\}\big] \\
&\geq \EE(|\mathfrak{X}_{11}|^{\nu_1}) \, \EE(|\mathfrak{X}_{22}|^{\nu_2}).
\end{aligned}
\]
\end{corollary}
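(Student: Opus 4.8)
The plan is to deduce everything from the exact evaluation \eqref{thm:1.eq.2} of Theorem~\ref{thm:1}. Writing $M := P_T P_T^{\top}$ and $c := \alpha_0/2$, that formula expresses the left-hand side as a product of four expectations times ${}_2F_1(-\nu_1,-\nu_2;c;M)$, and each of the four factors $\EE\{\etr(T\mathfrak{X})\}$, $\EE(|\mathfrak{X}_T|^{-\nu_0})^{-1}$, $\EE\{|(\mathfrak{X}_T)_{11}|^{\nu_1}\}$, $\EE\{|(\mathfrak{X}_T)_{22}|^{\nu_2}\}$ is strictly positive. Since $\Sigma^{-1}-2T\in\mathcal{S}_{++}^p$ forces $A_T\in\mathcal{S}_{++}^p$, its diagonal blocks are positive definite and $M$ is a symmetric positive semidefinite $p_1\times p_1$ matrix; moreover $\|M\|=\|P_T\|^2<1$ by positive definiteness of the Schur complement of $(A_T)_{11}$ in $A_T$, and $c>(p-1)/2\geq(p_1-1)/2$. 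Dividing out the positive prefactor, the two inequalities of the corollary reduce to the single analytic statement
\[
{}_2F_1(-\nu_1,-\nu_2;c;M)\ \geq\ 1+|I_{p_1}-M|^{\,c+\nu_1+\nu_2}\{{}_0F_1(c;\nu_1\nu_2 M)-1\}\ \geq\ 1.
\]

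The main device for the first inequality is the matrix analogue of Euler's transformation, which I would record as a lemma: for $\|M\|<1$ one has ${}_2F_1(-\nu_1,-\nu_2;c;M)=|I_{p_1}-M|^{\,c+\nu_1+\nu_2}\,{}_2F_1(c+\nu_1,c+\nu_2;c;M)$; its role is to replace the coefficients $(-\nu_1)_{\bb{\kappa}}(-\nu_2)_{\bb{\kappa}}$, which have indefinite signs, by the positive coefficients $(c+\nu_1)_{\bb{\kappa}}(c+\nu_2)_{\bb{\kappa}}$. Setting $D=|I_{p_1}-M|\in(0,1]$ and $e=c+\nu_1+\nu_2$, and using the generalized binomial identity $D^{-e}={}_1F_0(e;M)$ together with $C_{\bb{\kappa}}(\nu_1\nu_2 M)=(\nu_1\nu_2)^{|\bb{\kappa}|}C_{\bb{\kappa}}(M)$, the target rearranges (after dividing by $D^{e}>0$) to
\[
{}_2F_1(c+\nu_1,c+\nu_2;c;M)-{}_0F_1(c;\nu_1\nu_2 M)+1\ \geq\ {}_1F_0(c+\nu_1+\nu_2;M).
\]
All four series have nonnegative terms (the parameters are positive, $c>(p_1-1)/2$ keeps every $(c)_{\bb{\kappa}}>0$, and $C_{\bb{\kappa}}(M)\geq 0$ on the positive semidefinite cone) and converge absolutely for $\|M\|<1$, so it suffices to compare the coefficients of $C_{\bb{\kappa}}(M)/k!$ partition by partition. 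The empty partition gives equality, and for $|\bb{\kappa}|=k\geq 1$ the claim becomes the algebraic inequality
\[
(c+\nu_1)_{\bb{\kappa}}(c+\nu_2)_{\bb{\kappa}}-(\nu_1\nu_2)^{k}\ \geq\ (c)_{\bb{\kappa}}(c+\nu_1+\nu_2)_{\bb{\kappa}}.
\]

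I expect this last inequality on partitional rising factorials to be the main obstacle, and I would isolate it as the second technical lemma. Using $(a)_{\bb{\kappa}}=\prod_{j}(a-(j-1)/2)_{k_j}$, set $a_j=c-(j-1)/2>0$ and introduce the scalar quantity $\phi_a(n):=(a+\nu_1)_n(a+\nu_2)_n-(a)_n(a+\nu_1+\nu_2)_n$. The engine is the identity $(a+\nu_1+n)(a+\nu_2+n)=(a+n)(a+\nu_1+\nu_2+n)+\nu_1\nu_2$, which yields the recursion
\[
\phi_a(n+1)=(a+n)(a+\nu_1+\nu_2+n)\,\phi_a(n)+\nu_1\nu_2\,(a+\nu_1)_n(a+\nu_2)_n .
\]
From $\phi_a(0)=0$ this gives $\phi_a(n)\geq 0$ by induction, and then $\phi_a(n)\geq\nu_1\nu_2\,(a+\nu_1)_{n-1}(a+\nu_2)_{n-1}\geq(\nu_1\nu_2)^n$ for $n\geq 1$, using $(a+\nu_1)_m(a+\nu_2)_m\geq(\nu_1\nu_2)^m$ when $a>0$. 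Expanding $\prod_j\{(a_j)_{k_j}(a_j+\nu_1+\nu_2)_{k_j}+\phi_{a_j}(k_j)\}$ and retaining only the all-leading-term product and the all-$\phi$ product then delivers the partitional inequality. Finally, the right-hand bracket exceeds $1$ because $D^{e}\in(0,1]$ and ${}_0F_1(c;\nu_1\nu_2 M)\geq 1$ (its $k=0$ term), which is the second, simpler inequality; multiplying back through by the positive prefactor proves the displayed chain. The stated specialization is immediate: taking $T=0_{p\times p}$ and $\nu_0=0$ gives $\alpha_0=\alpha$, $P_T=P$, a unit prefactor on the $\etr$ and $|\mathfrak{X}_T|^{-\nu_0}$ terms, and $\mathfrak{X}_T$ distributed as $\mathfrak{X}$.
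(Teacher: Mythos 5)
Your proposal is correct and is essentially the paper's own proof: the same reduction (dividing out the positive prefactor from \eqref{thm:1.eq.2}) to the inequality ${}_2F_1(-\nu_1,-\nu_2;c;M) \geq 1 + |I_{p_1}-M|^{c+\nu_1+\nu_2}\{{}_0F_1(c;\nu_1\nu_2 M)-1\} \geq 1$, the same Herz--Euler transformation, and the same partition-by-partition coefficient comparison resting on the identity $(z+x)(z+y)=z(z+x+y)+xy$ --- your inductive recursion for $\phi_a(n)$ is merely a reformulation of the paper's direct expansion of $\prod_{j=1}^{k}\{(\tilde c+j-1)(\tilde c+j-1+a+b)+ab\}$, retaining the two extreme terms. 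One detail to state explicitly when writing this up: the ``all-$\phi$'' product must run only over the \emph{nonzero} parts of $\bb{\kappa}$ (the paper introduces $m_0$ for exactly this purpose), since $\phi_{a_j}(0)=0$ would otherwise annihilate that term and lose the needed $(\nu_1\nu_2)^{|\bb{\kappa}|}$ contribution.
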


\section{Proofs}\label{sec:proofs}

This section contains proofs of Theorem~\ref{thm:1} and Corollary~\ref{cor:1}. Lemmas~\ref{lem:1} and \ref{lem:2}, which are used along the way, are given in Appendix~\ref{app:tech.lemmas}.

\begin{proof}[Proof of Theorem~\ref{thm:1}]
Throughout the proof, for $i\in \{0,1,2\}$, write $\alpha_i = \alpha + 2\nu_i$. To establish the first claim of the theorem, let $\nu_1 > -\alpha/2 + (p_1 - 1)/2$ and $\nu_2 > -\alpha/2 + (p_2 - 1)/2$ be given. This choice ensures that the expectations and integrals are finite everywhere in the proof.

Recall the notation $A = \Sigma^{-1}/2$. Given that
\[
|X| = |X / X_{22}| |X_{22}| = |X_{11} - X_{12} X_{22}^{-1} X_{21}| |X_{22}|,
\]
one has\vspace{1mm}
\[
\begin{aligned}
&\EE(|\mathfrak{X}_{11}|^{\nu_1} |\mathfrak{X}_{22}|^{\nu_2})
= \frac{|A|^{\alpha/2}}{\Gamma_p(\alpha/2)} \int_{\mathcal{S}_{++}^p} |X_{11}|^{\nu_1} |X_{22}|^{\nu_2} \frac{|X|^{\alpha/2 - (p + 1)/2}}{\etr(A X)} \rd X \\
&\quad= \frac{|A|^{\alpha/2}}{\Gamma_p(\alpha/2)} \int_{\mathcal{S}_{++}^p} |X_{11}|^{\nu_1} |X_{22}|^{\alpha_2/2 - (p + 1)/2} \frac{|X_{11} - X_{12} X_{22}^{-1} X_{21}|^{\alpha/2 - (p + 1)/2}}{\etr(A X)} \rd X.
\end{aligned}
\]

Consider the transformation $X_{12} \mapsto X_{11}^{1/2} X_{12} X_{22}^{1/2}$. The Jacobian determinant is $\smash{|X_{11}|^{p_2/2} |X_{22}|^{p_1/2}}$; see, e.g., Theorem~2.1.5 of \citet{MR652932}. Under this transformation, the Schur complement $X / X_{22}$ becomes
\[
\begin{aligned}
X / X_{22}
&\mapsto X_{11} - X_{11}^{1/2} X_{12} X_{22}^{1/2} X_{22}^{-1} X_{22}^{1/2} X_{21} X_{11}^{1/2} \\
&= X_{11}^{1/2} (I_{p_1} - X_{12} X_{21}) X_{11}^{1/2},
\end{aligned}
\]
and the exponential trace factor, $\etr(A X)$, becomes
\[
\etr(A X) \mapsto \etr(A_{11} X_{11}) \etr(A_{22} X_{22}) \etr(2 A_{12} X_{22}^{1/2} X_{21} X_{11}^{1/2}).
\]

Therefore, $\EE (|\mathfrak{X}_{11}|^{\nu_1} |\mathfrak{X}_{22}|^{\nu_2})$ is equal to
\begin{multline*}
\frac{|A|^{\alpha/2}}{\Gamma_p(\alpha/2)} \int_{\mathcal{S}_{++}^{p_1}} \frac{|X_{11}|^{\alpha_1/2 - (p_1 + 1)/2}}{\etr(A_{11} X_{11})} \int_{\mathcal{S}_{++}^{p_2}} \frac{|X_{22}|^{\alpha_2/2 - (p_2 + 1)/2}}{\etr(A_{22} X_{22})} \\
\times \int_{X_{12} X_{21}\in \mathcal{D}_{p_1}} \frac{|I_{p_1} - X_{12} X_{21}|^{\alpha/2 - (p + 1)/2}}{\etr(2 X_{11}^{1/2} A_{12} X_{22}^{1/2} X_{21})} \rd X_{12} \rd X_{22} \rd X_{11},
\end{multline*}
where $\mathcal{D}_{p_1} = \{M\in \R^{p_1\times p_1} : M\in \mathcal{S}_{++}^{p_1} ~~\text{and}~~ I_{p_1} - M\in \mathcal{S}_{++}^{p_1}\}$.

Applying Eq.~(151) of \citet{MR181057} with $b = \alpha/2$, $Y = X_{12}$, $\smash{X = -2 X_{11}^{1/2} A_{12} X_{22}^{1/2}}$, and using the fact that $p_1 \leq p_2$ by assumption, one finds that the innermost integral above is equal to
\[
\frac{\pi^{p_1 p_2/2} \Gamma_{p_1}(\alpha/2 - p_2/2)}{\Gamma_{p_1}(\alpha/2)} \times {}_0F_1(\alpha/2; A_{21} X_{11} A_{12} X_{22}),
\]
upon invoking the invariance under cyclic permutations of the generalized hypergeometric function, ${}_0F_1$, in its matrix argument. By applying the expression for the multivariate gamma function in \eqref{eq:Gamma.m.prod}, one obtains
\[
\begin{aligned}
\Gamma_p(\alpha/2)
&= \frac{\pi^{p(p - 1)/4}}{\pi^{p_1(p_1 - 1)/4} \pi^{p_2(p_2 - 1)/4}} \Gamma_{p_2}(\alpha/2) \Gamma_{p_1}(\alpha/2 - p_2/2) \\[1.5mm]
&= \pi^{p_1 p_2/2} \Gamma_{p_2}(\alpha/2) \Gamma_{p_1}(\alpha/2 - p_2/2).
\end{aligned}
\]
Hence, one can write\vspace{-2mm}
\[
\begin{aligned}
\EE(|\mathfrak{X}_{11}|^{\nu_1} |\mathfrak{X}_{22}|^{\nu_2})
&= \frac{|A|^{\alpha/2}}{\Gamma_{p_1}(\alpha/2) \Gamma_{p_2}(\alpha/2)} \int_{\mathcal{S}_{++}^{p_1}} \frac{|X_{11}|^{\alpha_1/2 - (p_1 + 1)/2}}{\etr(A_{11} X_{11})} \\
&\quad\times\int_{\mathcal{S}_{++}^{p_2}} \frac{|X_{22}|^{\alpha_2/2 - (p_2 + 1)/2} {}_0F_1(\alpha/2; A_{21} X_{11} A_{12} X_{22})}{\etr(A_{22} X_{22})} \rd X_{22} \rd X_{11}.
\end{aligned}
\]

Now, by applying the transformation $X_{22}\mapsto A_{22}^{-1/2} X_{22} A_{22}^{-1/2}$, which has Jacobian determinant $\smash{|A_{22}|^{-(p_2 + 1)/2}}$ by Theorem~2.1.6 of \citet{MR652932}, followed by the recurrence relation for the generalized hypergeometric functions of matrix argument in Eq.~(2.1$'$) of \citet{MR69960}, one has
\[
\begin{aligned}
&\int_{\mathcal{S}_{++}^{p_2}} \frac{|X_{22}|^{\alpha_2/2 - (p_2 + 1)/2} {}_0F_1(\alpha/2; A_{21} X_{11} A_{12} X_{22})}{\etr(A_{22} X_{22})} \rd X_{22} \\[-0.5mm]
&\quad= \int_{\mathcal{S}_{++}^{p_2}} \frac{|X_{22}|^{\alpha_2/2 - (p_2 + 1)/2} {}_0F_1(\alpha/2; A_{22}^{-1/2} A_{21} X_{11} A_{12} A_{22}^{-1/2} X_{22})}{|A_{22}|^{\alpha_2/2} \etr(X_{22})} \rd X_{22} \\
&\quad= \frac{\Gamma_{p_2}(\alpha_2/2)}{|A_{22}|^{\alpha_2/2}} \times {}_1F_1(\alpha_2/2; \alpha/2; A_{12} A_{22}^{-1} A_{21} X_{11}).
\end{aligned}
\]

By applying the transformation $\smash{X_{11}\mapsto A_{11}^{-1/2} X_{11} A_{11}^{-1/2}}$, which has Jacobian determinant $\smash{|A_{11}|^{-(p_1 + 1)/2}}$ by Theorem~2.1.6 of \citet{MR652932}, followed again by the recurrence relation for the generalized hypergeometric functions of matrix argument \cite[Eq.~(2.1$'$)]{MR69960}, one finds
\[
\begin{aligned}
&\int_{\mathcal{S}_{++}^{p_1}} \frac{|X_{11}|^{\alpha_1/2 - (p_1 + 1)/2} {}_1F_1(\alpha_2/2; \alpha/2; A_{12} A_{22}^{-1} A_{21} X_{11})}{\etr(A_{11} X_{11})} \rd X_{11} \\[-0.5mm]
&\quad= \int_{\mathcal{S}_{++}^{p_1}} \frac{|X_{11}|^{\alpha_1/2 - (p_1 + 1)/2} {}_1F_1(\alpha_2/2; \alpha/2; A_{11}^{-1/2} A_{12} A_{22}^{-1} A_{21} A_{11}^{-1/2} X_{11})}{|A_{11}|^{\alpha_1/2} \etr(X_{11})} \rd X_{11} \\
&\quad= \frac{\Gamma_{p_1}(\alpha_1/2)}{|A_{11}|^{\alpha_1/2}} \times {}_2F_1(\alpha_1/2, \alpha_2/2; \alpha/2; A_{11}^{-1} A_{12} A_{22}^{-1} A_{21}).
\end{aligned}
\]

Collecting together the last three equations, and noting that $A_{11}^{-1} A_{12} A_{22}^{-1} A_{21}$ is a cyclic permutation of $P P^{\top}$ if $\smash{P = A_{11}^{-1/2} A_{12} A_{22}^{-1/2}}$, one deduces that
\[
\begin{aligned}
\EE(|\mathfrak{X}_{11}|^{\nu_1} |\mathfrak{X}_{22}|^{\nu_2})
&= \frac{|A|^{\alpha/2}}{\Gamma_{p_1}(\alpha/2) \Gamma_{p_2}(\alpha/2)}
\times \frac{\Gamma_{p_1}(\alpha_1/2)}{|A_{11}|^{\alpha_1/2}}
\times \frac{\Gamma_{p_2}(\alpha_2/2)}{|A_{22}|^{\alpha_2/2}} \\[1mm]
&\quad\times {}_2F_1(\alpha_1/2, \alpha_2/2; \alpha/2; P P^{\top}).
\end{aligned}
\]
Note that
\[
\frac{|A|^{\alpha/2}}{|A_{11}|^{\alpha_1/2} |A_{22}|^{\alpha_2/2}}
= \left(\frac{|A_{11}|}{|A|}\right)^{\nu_2} \left(\frac{|A_{22}|}{|A|}\right)^{\nu_1} \left(\frac{|A|}{|A_{11}| |A_{22}|}\right)^{\alpha/2 + \nu_1 + \nu_2}.
\]

Using the formulas for the inverse of a $2\times 2$ block matrix in Lemma~\ref{lem:1}, one has
\[
\frac{|A_{11}|}{|A|}
= \frac{|(\Sigma^{-1})_{11}/2|}{|\Sigma^{-1}/2|} = \frac{|(\Sigma/\Sigma_{22})^{-1}/2|}{|\Sigma^{-1}/2|} = \frac{|2 \Sigma|}{|2 (\Sigma/\Sigma_{22})|} = |2 \Sigma_{22}|,
\]
\[
\frac{|A_{22}|}{|A|}
= \frac{|(\Sigma^{-1})_{22}/2|}{|\Sigma^{-1}/2|} = \frac{|(\Sigma/\Sigma_{11})^{-1}/2|}{|\Sigma^{-1}/2|} = \frac{|2 \Sigma|}{|2 (\Sigma/\Sigma_{11})|} = |2 \Sigma_{11}|,
\]
and
\[
\frac{|A|}{|A_{11}| |A_{22}|}
= \frac{|A/A_{22}|}{|A_{11}|} = \frac{|A_{11}|^{1/2} |I_{p_1} - P P^{\top}| |A_{11}|^{1/2}}{|A_{11}|} = |I_{p_1} - P P^{\top}|.
\]
Therefore, the latter expectation can be rewritten as
\[
\begin{aligned}
\EE(|\mathfrak{X}_{11}|^{\nu_1} |\mathfrak{X}_{22}|^{\nu_2})
&= |2 \Sigma_{11}|^{\nu_1} \frac{\Gamma_{p_1}(\alpha_1/2)}{\Gamma_{p_1}(\alpha/2)} \, |2 \Sigma_{22}|^{\nu_2} \frac{\Gamma_{p_2}(\alpha_2/2)}{\Gamma_{p_2}(\alpha/2)} \\
&\quad\times |I_{p_1} - P P^{\top}|^{\alpha/2 + \nu_1 + \nu_2} {}_2F_1(\alpha_1/2, \alpha_2/2; \alpha/2; P P^{\top}).
\end{aligned}
\]

Moreover, by Herz's generalization of Euler's transformation for ${}_2F_1$, the Gaussian hypergeometric function of matrix argument (\cite[Eq.~(6.3)]{MR69960}, \cite[Eq.~(35.7.6)]{MR2723248}), one has
\[
|I_{p_1} - P P^{\top}|^{\alpha/2 + \nu_1 + \nu_2} {}_2F_1(\alpha_1/2, \alpha_2/2; \alpha/2; P P^{\top})
= {}_2F_1(-\nu_1, -\nu_2; \alpha/2; P P^{\top}).
\]
Hence, Lemma~\ref{lem:2} implies
\[
\EE(|\mathfrak{X}_{11}|^{\nu_1} |\mathfrak{X}_{22}|^{\nu_2})
= \EE(|\mathfrak{X}_{11}|^{\nu_1}) \, \EE(|\mathfrak{X}_{22}|^{\nu_2}) \, {}_2F_1(-\nu_1, -\nu_2; \alpha/2; P P^{\top}),
\]
which proves \eqref{thm:1.eq.1}.

To establish the second claim of the theorem, recall that $\alpha_0 = \alpha + 2\nu_0$, and let $\nu_0 > -\alpha/2 + (p - 1)/2$, $\nu_1 > -\alpha_0/2 + (p_1 - 1)/2$, $\nu_2 > -\alpha_0/2 + (p_2 - 1)/2$ be given. Again, this choice ensures that the expectations and integrals are finite everywhere. Let $T$ be a $p\times p$ real symmetric matrix such that $\Sigma^{-1} - 2T\in \mathcal{S}_{++}^p$, and consider
\[
\mathfrak{X}_T \sim \mathcal{W}_p\{\alpha_0, (\Sigma^{-1} - 2T)^{-1}\}.
\]
A straightforward renormalization argument yields
\[
\begin{aligned}
&\EE\{\etr(T \mathfrak{X}) \, |\mathfrak{X}|^{\nu_0} |\mathfrak{X}_{11}|^{\nu_1} |\mathfrak{X}_{22}|^{\nu_2}\} \\
&\quad= \frac{|2 (\Sigma^{-1} - 2T)^{-1}|^{\alpha_0/2} \Gamma_p(\alpha_0/2)}{|2 \Sigma|^{\alpha/2} \Gamma_p(\alpha/2)} \, \EE\{|(\mathfrak{X}_T)_{11}|^{\nu_1} |(\mathfrak{X}_T)_{22}|^{\nu_2}\} \\
&\quad= |I_p - 2 T \Sigma|^{-\alpha/2} \, |2 (\Sigma^{-1} - 2 T)^{-1}|^{\nu_0} \, \frac{\Gamma_p(\alpha_0/2)}{\Gamma_p(\alpha/2)} \, \EE\{|(\mathfrak{X}_T)_{11}|^{\nu_1} |(\mathfrak{X}_T)_{22}|^{\nu_2}\}.
\end{aligned}
\]
By applying \eqref{thm:1.eq.1}, Lemma~\ref{lem:2} and the fact that $\EE\{\etr(T \mathfrak{X})\} = |I_p - 2 T \Sigma|^{-\alpha/2}$ by~\eqref{eq:Lt}, one deduces that
\[
\begin{aligned}
&\EE\{\etr(T \mathfrak{X}) \, |\mathfrak{X}|^{\nu_0} |\mathfrak{X}_{11}|^{\nu_1} |\mathfrak{X}_{22}|^{\nu_2}\} \\
&\quad= \EE\{\etr(T \mathfrak{X})\} \, \EE(|\mathfrak{X}_T|^{-\nu_0})^{-1} \, \EE\{|(\mathfrak{X}_T)_{11}|^{\nu_1}\} \, \EE\{|(\mathfrak{X}_T)_{22}|^{\nu_2}\} \\
&\qquad\times {}_2F_1(-\nu_1, -\nu_2; \alpha_0/2; P_T P_T^{\top}).
\end{aligned}
\]
This proves \eqref{thm:1.eq.2} and concludes the argument.
\end{proof}

\begin{proof}[Proof of Corollary~\ref{cor:1}]
Let $m\in \N$ be given and define the set
\[
\mathcal{D}_m = \{M\in \R^{m\times m} : M\in \mathcal{S}_{++}^m ~~\text{and}~~ I_m - M\in \mathcal{S}_{++}^m\}.
\]
To establish the claim of the corollary, it is sufficient to show that for every $a, b\in [0, \infty)$, $c > (m - 1)/2$, and $M \in \mathcal{D}_m$, one has
\begin{equation}\label{eq:lower.bound.2F1}
{}_2F_1(-a,-b;c;M) \geq 1 + |I_m - M|^{c+a+b} \, \{{}_0F_1(c;abM) - 1\} \geq 1.
\end{equation}

If $a = 0$ or $b = 0$, then \eqref{eq:lower.bound.2F1} is trivial. Therefore, assume for the remainder of the proof that $a,b\in (0,\infty)$.

Consider the following identity, which holds for all $x,y,z\in \R$,
\[
(z + x) (z + y) = z (z + x + y) + xy.
\]
By applying this identity to the rising factorials defined in Section~\ref{sec:definitions}, one obtains, for any $\tilde{c}\in (0,\infty)$ and any $k\in \N$,
\[
\begin{aligned}
(\tilde{c}+a)_k (\tilde{c}+b)_k
&= \prod_{j=1}^k \{(\tilde{c}+j-1)+a\} \{(\tilde{c}+j-1)+b\} \\
&= \prod_{j=1}^k \{(\tilde{c}+j-1)(\tilde{c}+j-1+a+b) + ab\} \\
& \geq \prod_{j=1}^k (\tilde{c}+j-1)(\tilde{c}+j-1+a+b) + (ab)^k \\
&= (\tilde{c})_k (\tilde{c}+a+b)_k + (ab)^k.
\end{aligned}
\]
Next, by applying the latter inequality to the partitional rising factorials defined in \eqref{eq:partitional.rising}, one sees that, for any $c > (m - 1)/2$, and any partition $\bb{\kappa} = (k_1,\ldots,k_m)$ such that $k_1 \geq \dots \geq k_{m_0} > 0 = k_{m_0+1} = \dots = k_m$ for some integer $m_0 \leq m$, one has
\[
\begin{aligned}
(c+a)_{\bb{\kappa}} \, (c+b)_{\bb{\kappa}}
&= \prod_{j=1}^{m_0} (c-(j-1)/2+a)_{k_j} (c-(j-1)/2+b)_{k_j} \\
& \geq \prod_{j=1}^{m_0} \{(c-(j-1)/2)_{k_j} (c-(j-1)/2+a+b)_{k_j} + (ab)^{k_j}\} \\
& \geq \prod_{j=1}^{m_0} (c-(j-1)/2)_{k_j} (c+a+b-(j-1)/2)_{k_j} + \prod_{j=1}^{m_0} (ab)^{k_j} \\
&= (c)_{\bb{\kappa}} \, (c+a+b)_{\bb{\kappa}} + (ab)^{|\bb{\kappa}|}.
\end{aligned}
\]
As a result, for all $\bb{\kappa} \neq \bb{0}_m$,
\[
\frac{(c+a)_{\bb{\kappa}} \, (c+b)_{\bb{\kappa}}}{(c)_{\bb{\kappa}}} \geq (c+a+b)_{\bb{\kappa}} + \frac{(ab)^{|\bb{\kappa}|}}{(c)_{\bb{\kappa}}}.
\]
Multiplying both sides by $C_{\bb{\kappa}}(M)/|\bb{\kappa}|!$, and summing over all partitions $\bb{\kappa}$, one finds
\[
\begin{aligned}
{}_2F_1(c+a,c+b;c;M)
& \geq {}_1F_0(c+a+b;M) + {}_0F_1(c;abM) - 1 \\
&= |I_m - M|^{-(c+a+b)} + {}_0F_1(c;abM) - 1;
\end{aligned}
\]
see, e.g., \citet[p.~485]{MR69960} for the last equality. It readily follows that
\[
|I_m - M|^{c+a+b} {}_2F_1(c+a,c+b;c;M) \geq 1 + |I_m - M|^{c+a+b} \{{}_0F_1(c;abM) - 1\}.
\]
The left-hand side is equal to ${}_2F_1(-a,-b;c;M)$ by Herz's generalization of Euler's transformation; see \cite[Eq.~(6.3)]{MR69960} or \cite[Eq.~(35.7.6)]{MR2723248}. The right-hand is larger than one because the partitional rising factorials and zonal polynomials are all positive in ${}_0F_1(c;abM) - 1$, given the restrictions on $a,b,c$ and $M$. This concludes the proof of the corollary.
\end{proof}

\appendix

\section{Numerical validation}\label{app:numerical.validation}

Matlab codes and tables which numerically validate the results of Theorem~\ref{thm:1} and Corollary~\ref{cor:1} are available in \cite{GenestOuimetRichards2024github}. Hypergeometric functions of matrix argument are computed using the \texttt{mhg} command introduced by \citet{MR2196994}.

\section{Technical lemmas}\label{app:tech.lemmas}

The first lemma contains well-known formulas from linear algebra for the inverse of a $2\times 2$ block matrix; see, e.g., Theorem~2.1 of \citet{MR1873248}.

\begin{lemma}\label{lem:1}
Let $M$ be a matrix of width at least $2$ which is partitioned into a $2\times 2$ block matrix as follows:
\[
M =
\begin{bmatrix}
M_{11} & M_{12} \\
M_{21} & M_{22}
\end{bmatrix}.
\]
\begin{enumerate}[1.]
\item
If $M_{11}$ is invertible and the Schur complement $M / M_{11} = M_{22} - M_{21} M_{11}^{-1} M_{12}$ is invertible, then
\[
M^{-1} =
\begin{bmatrix}
M_{11}^{-1} + M_{11}^{-1} M_{12} (M / M_{11})^{-1} M_{21} M_{11}^{-1} & - M_{11}^{-1} M_{12} (M / M_{11})^{-1} \\[1mm]
- (M / M_{11})^{-1} M_{21} M_{11}^{-1} & (M / M_{11})^{-1}
\end{bmatrix}.
\]
\item
If $M_{22}$ is invertible and the Schur complement $M / M_{22} = M_{11} - M_{12} M_{22}^{-1} M_{21}$ is invertible, then
\[
M^{-1} =
\begin{bmatrix}
(M / M_{22})^{-1} & - (M / M_{22})^{-1} M_{12} M_{22}^{-1} \\[1mm]
- M_{22}^{-1} M_{21} (M / M_{22})^{-1} & M_{22}^{-1} + M_{22}^{-1} M_{21} (M / M_{22})^{-1} M_{12} M_{22}^{-1}
\end{bmatrix}.
\]
\end{enumerate}
\end{lemma}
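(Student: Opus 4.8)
The plan is to prove both parts by exhibiting a block triangular factorization of $M$ and then inverting it factor by factor. Since the two parts are interchanged by conjugating $M$ with the block-swap permutation (that is, by relabeling the two index sets), it suffices to treat one of them carefully; I would handle part~1, where $M_{11}$ and the Schur complement $M/M_{11}$ are invertible, and then recover part~2 by symmetry.

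First I would write down the block Gaussian-elimination ($LDU$) factorization
\[
M
=
\begin{bmatrix}
I & 0 \\
M_{21} M_{11}^{-1} & I
\end{bmatrix}
\begin{bmatrix}
M_{11} & 0 \\
0 & M/M_{11}
\end{bmatrix}
\begin{bmatrix}
I & M_{11}^{-1} M_{12} \\
0 & I
\end{bmatrix},
\]
whose validity is confirmed by multiplying the three factors and using the identity $M_{22} = (M/M_{11}) + M_{21} M_{11}^{-1} M_{12}$. The two outer factors are unipotent block-triangular, hence invertible with inverses obtained simply by negating their off-diagonal block, and the middle factor is block diagonal, invertible precisely because $M_{11}$ and $M/M_{11}$ are.

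Inverting the product in reverse order, $M^{-1} = U^{-1} D^{-1} L^{-1}$, I would multiply the three explicit inverse factors together. The single off-diagonal blocks of $L^{-1}$ and $U^{-1}$ produce the stated cross terms $-M_{11}^{-1} M_{12} (M/M_{11})^{-1}$ and $-(M/M_{11})^{-1} M_{21} M_{11}^{-1}$, while the $(1,1)$ block acquires the correction $M_{11}^{-1} M_{12} (M/M_{11})^{-1} M_{21} M_{11}^{-1}$, reproducing the formula in part~1 verbatim. Part~2 then follows by applying part~1 to the relabeled matrix $N = \Pi M \Pi^{\top}$ obtained by interchanging the two index blocks, since this sends the role of $M_{11}$ to $M_{22}$ and that of $M/M_{11}$ to $M/M_{22}$, matching exactly the hypotheses of part~2; transporting the resulting formula back via $M^{-1} = \Pi^{\top} N^{-1} \Pi$ gives the claimed expression.

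There is essentially no conceptual obstacle here, as this is a classical identity; the only thing to watch is the bookkeeping, namely keeping the left/right placement of the inverse blocks and the signs consistent through the three-factor product, and checking that the stated invertibility hypotheses are exactly what is needed for the middle factor $D^{-1}$ to exist. As an equally valid and even more self-contained alternative, one could bypass the factorization entirely and verify directly that $M$ times the claimed right-hand side equals the identity (and likewise on the left), which reduces to four block identities, each an immediate consequence of the definition of the Schur complement.
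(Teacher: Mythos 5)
Your proposal is correct: the $LDU$ block factorization is valid under the hypotheses of part~1 (invertibility of $M_{11}$ and of $M/M_{11}$ is exactly what makes the block-diagonal middle factor invertible), the three-factor inversion reproduces the stated formula verbatim, and the permutation argument $N = \Pi M \Pi^{\top}$, $M^{-1} = \Pi^{\top} N^{-1} \Pi$ correctly transports part~1 into part~2. Note, however, that the paper itself offers no proof of this lemma at all: it is stated as a collection of ``well-known formulas from linear algebra'' with a citation to Theorem~2.1 of the reference by Lu and Shiou on inverses of $2\times 2$ block matrices. So your argument is not an alternative to the paper's proof so much as a self-contained replacement for an external citation. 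Of your two suggested routes, the $LDU$ factorization is the more instructive (it explains \emph{why} the Schur complement appears, and yields the determinant identity $|M| = |M_{11}|\,|M/M_{11}|$ that the paper uses elsewhere in the proof of Theorem~\ref{thm:1}), while the direct verification that $M$ times the claimed block matrix equals the identity is shorter and needs no factorization; either would suffice here. One small point of bookkeeping worth making explicit: for the inverses $M_{11}^{-1}$, $M_{22}^{-1}$, and $M^{-1}$ to make sense, the diagonal blocks (and hence $M$) must be square, which is implicit in the lemma's statement and in your relabeling argument.
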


The second lemma provides a general expression for the expectation of any principal minor of a Wishart random matrix. It is a consequence of Corollary 3.2.6 and p.~101 of \citet{MR652932}.

\begin{lemma}\label{lem:2}
For any $p_1,p_2\in \N$, let $p = p_1 + p_2$, and fix any degree-of-freedom $\alpha \in (p-1, \infty)$. Let $\Sigma\in \mathcal{S}_{++}^p$ be any $p \times p$ positive definite matrix expressed in the form $(\Sigma_{ij})$, where for each $i,j\in \{1,2\}$, the block $\Sigma_{ij}$ has size~$p_i \times p_j$. Assume that
\[
\mathfrak{X}
= \begin{bmatrix}
\mathfrak{X}_{11} & \mathfrak{X}_{12} \\
\mathfrak{X}_{21} & \mathfrak{X}_{22} \\
\end{bmatrix} \sim \mathcal{W}_p(\alpha,\Sigma),
\quad \text{with}~~~
\Sigma =
\begin{bmatrix}
\Sigma_{11} & \Sigma_{12} \\
\Sigma_{21} & \Sigma_{22} \\
\end{bmatrix}.
\]
Then, for every $i\in \{1,2\}$ and real $\nu_i > -\alpha/2 + (p_i - 1)/2$, one has
\[
\EE(|\mathfrak{X}_{ii}|^{\nu_i}) = |2 \Sigma_{ii}|^{\nu_i} \frac{\Gamma_{p_i}(\alpha/2 + \nu_i)}{\Gamma_{p_i}(\alpha/2)}.
\]
\end{lemma}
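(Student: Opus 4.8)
The plan is to reduce the statement to the evaluation of a single Wishart-type integral and then to compute that integral in closed form using the multivariate gamma function. The one structural ingredient I would invoke is the marginalization property of the Wishart law: if $\mathfrak{X}\sim \mathcal{W}_p(\alpha,\Sigma)$ is partitioned into the $2\times 2$ block form above, then each diagonal block is again Wishart, with the same degree-of-freedom and the corresponding scale block, that is, $\mathfrak{X}_{ii}\sim \mathcal{W}_{p_i}(\alpha,\Sigma_{ii})$. This is exactly Corollary~3.2.6 (together with the discussion on p.~101) of \citet{MR652932}. Granting it, the problem collapses to computing $\EE(|\mathfrak{Y}|^{\nu})$ for a generic $\mathfrak{Y}\sim \mathcal{W}_m(\alpha,\Psi)$, where it suffices to take $m=p_i$, $\Psi=\Sigma_{ii}$ and $\nu=\nu_i$.

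First I would write this expectation as an integral against the Wishart density $f_{\alpha,\Psi}$ and absorb the factor $|Y|^{\nu}$ into the determinantal exponent, obtaining
\[
\EE(|\mathfrak{Y}|^{\nu}) = \frac{1}{|2\Psi|^{\alpha/2}\,\Gamma_m(\alpha/2)} \int_{\mathcal{S}_{++}^m} |Y|^{(\alpha/2+\nu)-(m+1)/2}\,\etr(-\Psi^{-1}Y/2)\rd Y.
\]
To evaluate the remaining integral I would apply the scaling substitution $Y = 2\,\Psi^{1/2} Z\,\Psi^{1/2}$, whose Jacobian determinant is $|2\Psi|^{(m+1)/2}$ and under which $\etr(-\Psi^{-1}Y/2)=\etr(-Z)$ by the cyclic invariance of the trace, while $|Y| = |2\Psi|\,|Z|$. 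Collecting the determinantal factors reduces the integral to the defining integral \eqref{eq:Gamma.m.int} of the multivariate gamma function with parameter $\beta = \alpha/2+\nu$, giving
\[
\int_{\mathcal{S}_{++}^m} |Y|^{(\alpha/2+\nu)-(m+1)/2}\,\etr(-\Psi^{-1}Y/2)\rd Y = |2\Psi|^{\alpha/2+\nu}\,\Gamma_m(\alpha/2+\nu).
\]
Dividing through then yields $\EE(|\mathfrak{Y}|^{\nu}) = |2\Psi|^{\nu}\,\Gamma_m(\alpha/2+\nu)/\Gamma_m(\alpha/2)$, which is the asserted formula once $m=p_i$, $\Psi=\Sigma_{ii}$ and $\nu=\nu_i$ are substituted.

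There is no serious obstacle here; the argument is essentially bookkeeping. The two points requiring care are, first, invoking the marginalization property with the correct parameters — the diagonal block keeps the full degree-of-freedom $\alpha$ and the scale block $\Sigma_{ii}$, rather than any Schur complement — and, second, tracking the scale factors $|2\Psi|$ correctly through the substitution so that they telescope into the stated power $|2\Sigma_{ii}|^{\nu_i}$. Finally, I would note that the displayed integral, equivalently $\Gamma_m(\alpha/2+\nu)$, is finite precisely when $\alpha/2+\nu > (m-1)/2$, i.e.\ $\nu > -\alpha/2+(m-1)/2$; with $m=p_i$ this is exactly the hypothesis $\nu_i > -\alpha/2+(p_i-1)/2$, which is therefore the natural range of validity.
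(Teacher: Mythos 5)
Your proposal is correct and follows essentially the same route as the paper, which simply cites Corollary~3.2.6 of Muirhead (marginal blocks of a Wishart are Wishart with the same degrees of freedom and scale block $\Sigma_{ii}$) together with the determinant-moment formula on p.~101 of that book. Your contribution is merely to spell out the latter computation explicitly via the substitution $Y = 2\Psi^{1/2}Z\Psi^{1/2}$ and the defining integral of $\Gamma_m$, which is exactly how that cited formula is obtained, including the correct range $\nu_i > -\alpha/2 + (p_i-1)/2$.
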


\bibliographystyle{plainnat}
\bibliography{PAMS_bib}

\begin{thebibliography}{43}
\providecommand{\natexlab}[1]{#1}
\providecommand{\url}[1]{\texttt{#1}}
\expandafter\ifx\csname urlstyle\endcsname\relax
  \providecommand{\doi}[1]{doi: #1}\else
  \providecommand{\doi}{doi: \begingroup \urlstyle{rm}\Url}\fi

\bibitem[{Arias-de-Reyna}(1998)]{MR1653495}
J.~{Arias-de-Reyna}.
\newblock Gaussian variables, polynomials and permanents.
\newblock \emph{Linear Algebra Appl.}, 285\penalty0 (1--3):\penalty0 107--114,
  1998.
\newblock \href{http://www.ams.org/mathscinet-getitem?mr=MR1653495}{MR1653495}.

\bibitem[Ben\'{\i}tez et~al.(1998)Ben\'{\i}tez, Sarantopoulos, and
  Tonge]{MR1636556}
C.~Ben\'{\i}tez, Y.~Sarantopoulos, and A.~Tonge.
\newblock Lower bounds for norms of products of polynomials.
\newblock \emph{Math. Proc. Cambridge Philos. Soc.}, 124\penalty0 (3):\penalty0
  395--408, 1998.
\newblock \href{http://www.ams.org/mathscinet-getitem?mr=MR1636556}{MR1636556}.

\bibitem[Constantine(1963)]{MR181056}
A.~G. Constantine.
\newblock Some non-central distribution problems in multivariate analysis.
\newblock \emph{Ann. Math. Statist.}, 34\penalty0 (4):\penalty0 1270--1285,
  1963.
\newblock \href{http://www.ams.org/mathscinet-getitem?mr=MR181056}{MR181056}.

\bibitem[Drton et~al.(2008)Drton, Massam, and Olkin]{MR2458187}
M.~Drton, H.~Massam, and I.~Olkin.
\newblock Moments of minors of {W}ishart matrices.
\newblock \emph{Ann. Statist.}, 36\penalty0 (5):\penalty0 2261--2283, 2008.
\newblock \href{http://www.ams.org/mathscinet-getitem?mr=MR2458187}{MR2458187}.

\bibitem[Edelmann et~al.(2023)Edelmann, Richards, and Royen]{MR4554766}
D.~Edelmann, D.~Richards, and T.~Royen.
\newblock Product inequalities for multivariate {G}aussian, gamma, and
  positively upper orthant dependent distributions.
\newblock \emph{Statist. Probab. Lett.}, 197:\penalty0 109820, 8 pp., 2023.
\newblock \href{http://www.ams.org/mathscinet-getitem?mr=MR4554766}{MR4554766}.

\bibitem[Frenkel(2008)]{MR2385646}
P.~E. Frenkel.
\newblock Pfaffians, {H}afnians and products of real linear functionals.
\newblock \emph{Math. Res. Lett.}, 15\penalty0 (2):\penalty0 351--358, 2008.
\newblock \href{http://www.ams.org/mathscinet-getitem?mr=MR2385646}{MR2385646}.

\bibitem[Genest and Ouimet(2022)]{MR4466643}
C.~Genest and F.~Ouimet.
\newblock A combinatorial proof of the {G}aussian product inequality beyond the
  {MTP}${}_2$ case.
\newblock \emph{Depend. Model.}, 10\penalty0 (1):\penalty0 236--244, 2022.
\newblock \href{http://www.ams.org/mathscinet-getitem?mr=MR4466643}{MR4466643}.

\bibitem[Genest and Ouimet(2023)]{MR4538422}
C.~Genest and F.~Ouimet.
\newblock Miscellaneous results related to the {G}aussian product inequality
  conjecture for the joint distribution of traces of {W}ishart matrices.
\newblock \emph{J. Math. Anal. Appl.}, 523\penalty0 (1):\penalty0 26951, 10
  pp., 2023.
\newblock \href{http://www.ams.org/mathscinet-getitem?mr=MR4538422}{MR4538422}.

\bibitem[Genest et~al.(2024{\natexlab{a}})Genest, Ouimet, and
  Richards]{GenestOuimetRichards2024github}
C.~Genest, F.~Ouimet, and D.~Richards.
\newblock Numerical{V}alidation{W}ishart, 2024{\natexlab{a}}.
\newblock Available online at
  \href{https://github.com/FredericOuimetMcGill}{https://github.com/FredericOuimetMcGill}.

\bibitem[Genest et~al.(2024{\natexlab{b}})Genest, Ouimet, and
  Richards]{MR4757806}
C.~Genest, F.~Ouimet, and D.~Richards.
\newblock On {W}ilks' joint moment formulas for embedded principal minors of
  {W}ishart random matrices.
\newblock \emph{Stat}, 13\penalty0 (2):\penalty0 e706, 6 pp.,
  2024{\natexlab{b}}.
\newblock \href{http://www.ams.org/mathscinet-getitem?mr=MR4757806}{MR4757806}.

\bibitem[Genest et~al.(2024{\natexlab{c}})Genest, Ouimet, and
  Richards]{arXiv:2311.00202}
C.~Genest, F.~Ouimet, and D.~Richards.
\newblock On the {G}aussian product inequality conjecture for disjoint
  principal minors of {W}ishart random matrices.
\newblock \emph{Preprint}, pages 1--27, 2024{\natexlab{c}}.
\newblock \href{https://arxiv.org/abs/2311.00202}{arXiv:2311.00202v2}.

\bibitem[Gross and Richards(1987)]{MR882715}
K.~I. Gross and D.~St.~P. Richards.
\newblock Special functions of matrix argument. {I}. {A}lgebraic induction,
  zonal polynomials, and hypergeometric functions.
\newblock \emph{Trans. Amer. Math. Soc.}, 301\penalty0 (2):\penalty0 781--811,
  1987.
\newblock \href{http://www.ams.org/mathscinet-getitem?mr=MR882715}{MR882715}.

\bibitem[Herry et~al.(2024)Herry, Malicet, and Poly]{MR4661091}
R.~Herry, D.~Malicet, and G.~Poly.
\newblock A short proof of a strong form of the three dimensional {G}aussian
  product inequality.
\newblock \emph{Proc. Amer. Math. Soc.}, 152\penalty0 (1):\penalty0 403--409,
  2024.
\newblock \href{http://www.ams.org/mathscinet-getitem?mr=MR4661091}{MR4661091}.

\bibitem[Herz(1955)]{MR69960}
C.~S. Herz.
\newblock Bessel functions of matrix argument.
\newblock \emph{Ann. of Math. (2)}, 61:\penalty0 474--523, 1955.
\newblock \href{http://www.ams.org/mathscinet-getitem?mr=MR69960}{MR69960}.

\bibitem[Hillier and Kan(2021)]{doi:10.1007/s40953-021-00267-7}
G.~Hillier and R.~M. Kan.
\newblock Moments of a {W}ishart matrix.
\newblock \emph{J. Quant. Econ.}, 19\penalty0 (Suppl 1):\penalty0 S141--S162,
  2021.
\newblock
  \href{https://doi.org/10.1007/s40953-021-00267-7}{doi:10.1007/s40953-021-00267-7}.

\bibitem[Hillier and Kan(2024)]{doi:10.1111/sjos.12707}
G.~Hillier and R.~M. Kan.
\newblock On the expectations of equivariant matrix-valued functions of
  {W}ishart and inverse {W}ishart matrices.
\newblock \emph{Scand. J. Statist.}, 51\penalty0 (2):\penalty0 697--723, 2024.
\newblock \href{https://doi.org/10.1111/sjos.12707}{doi:10.1111/sjos.12707}.

\bibitem[Hu et~al.(2023)Hu, Zhao, and Zhou]{MR4530374}
Z.-C. Hu, H.~Zhao, and Q.-Q. Zhou.
\newblock Quantitative versions of the two-dimensional {G}aussian product
  inequalities.
\newblock \emph{J. Inequal. Appl.}, pages 1--11, Paper no. 2, 2023.
\newblock \href{http://www.ams.org/mathscinet-getitem?mr=MR4530374}{MR4530374}.

\bibitem[James(1964)]{MR181057}
A.~T. James.
\newblock Distributions of matrix variates and latent roots derived from normal
  samples.
\newblock \emph{Ann. Math. Statist.}, 35\penalty0 (2):\penalty0 475--501, 1964.
\newblock \href{http://www.ams.org/mathscinet-getitem?mr=MR181057}{MR181057}.

\bibitem[Karlin and Rinott(1981)]{MR628759}
S.~Karlin and Y.~Rinott.
\newblock Total positivity properties of absolute value multinormal variables
  with applications to confidence interval estimates and related probabilistic
  inequalities.
\newblock \emph{Ann. Statist.}, 9\penalty0 (5):\penalty0 1035--1049, 1981.
\newblock \href{http://www.ams.org/mathscinet-getitem?mr=MR628759}{MR628759}.

\bibitem[Kim et~al.(2025)Kim, Kim, and Kim]{MR4794515}
B.~Kim, J.~Kim, and J.~Kim.
\newblock Three-dimensional {G}aussian product inequality with positive integer
  order moments.
\newblock \emph{J. Math. Anal. Appl.}, 542\penalty0 (2):\penalty0 128804, 21
  pp., 2025.
\newblock \href{http://www.ams.org/mathscinet-getitem?mr=MR4794515}{MR4794515}.

\bibitem[Koev and Edelman(2006)]{MR2196994}
P.~Koev and A.~Edelman.
\newblock The efficient evaluation of the hypergeometric function of a matrix
  argument.
\newblock \emph{Math. Comp.}, 75\penalty0 (254):\penalty0 833--846, 2006.
\newblock \href{http://www.ams.org/mathscinet-getitem?mr=MR2196994}{MR2196994}.

\bibitem[Lan et~al.(2020)Lan, Hu, and Sun]{MR4052574}
G.~Lan, Z.-C. Hu, and W.~Sun.
\newblock The three-dimensional {G}aussian product inequality.
\newblock \emph{J. Math. Anal. Appl.}, 485\penalty0 (2):\penalty0 123858, 19
  pp., 2020.
\newblock \href{http://www.ams.org/mathscinet-getitem?mr=MR4052574}{MR4052574}.

\bibitem[Letac and Massam(2004)]{MR2066255}
G.~Letac and H.~Massam.
\newblock All invariant moments of the {W}ishart distribution.
\newblock \emph{Scand. J. Statist.}, 31\penalty0 (2):\penalty0 295--318, 2004.
\newblock \href{http://www.ams.org/mathscinet-getitem?mr=MR2066255}{MR2066255}.

\bibitem[Liu et~al.(2017)Liu, Wang, and Yang]{MR3608204}
Z.~Liu, Z.~Wang, and X.~Yang.
\newblock A {G}aussian expectation product inequality.
\newblock \emph{Statist. Probab. Lett.}, 124:\penalty0 1--4, 2017.
\newblock \href{http://www.ams.org/mathscinet-getitem?mr=MR3608204}{MR3608204}.

\bibitem[Lu and Richards(2001)]{MR1868979}
I-Li Lu and D.~St.~P. Richards.
\newblock Mac{M}ahon's master theorem, representation theory, and moments of
  {W}ishart distributions.
\newblock \emph{Adv. in Appl. Math.}, 27\penalty0 (2--3):\penalty0 531--547,
  2001.
\newblock \href{http://www.ams.org/mathscinet-getitem?mr=MR1868979}{MR1868979}.

\bibitem[Lu and Shiou(2002)]{MR1873248}
T.-T. Lu and S.-H. Shiou.
\newblock Inverses of {$2\times 2$} block matrices.
\newblock \emph{Comput. Math. Appl.}, 43\penalty0 (1--2):\penalty0 119--129,
  2002.
\newblock \href{http://www.ams.org/mathscinet-getitem?mr=MR1873248}{MR1873248}.

\bibitem[Malicet et~al.(2016)Malicet, Nourdin, Peccati, and Poly]{MR3425898}
D.~Malicet, I.~Nourdin, G.~Peccati, and G.~Poly.
\newblock Squared chaotic random variables: {N}ew moment inequalities with
  applications.
\newblock \emph{J. Funct. Anal.}, 270\penalty0 (2):\penalty0 649--670, 2016.
\newblock \href{http://www.ams.org/mathscinet-getitem?mr=MR3425898}{MR3425898}.

\bibitem[Muirhead(1982)]{MR652932}
R.~J. Muirhead.
\newblock \emph{Aspects of {M}ultivariate {S}tatistical {T}heory}.
\newblock John Wiley \& Sons, Inc., New York, 1982.
\newblock ISBN 0-471-09442-0.
\newblock \href{http://www.ams.org/mathscinet-getitem?mr=MR652932}{MR652932}.

\bibitem[Nabeya(1951)]{MR45347}
S.~Nabeya.
\newblock Absolute moments in {$2$}-dimensional normal distribution.
\newblock \emph{Ann. Inst. Statist. Math., Tokyo}, 3\penalty0 (1):\penalty0 1
  p., 1951.
\newblock \href{http://www.ams.org/mathscinet-getitem?mr=MR45347}{MR45347}.

\bibitem[Nabeya(1952)]{MR52072}
S.~Nabeya.
\newblock Absolute moments in {$3$}-dimensional normal distribution.
\newblock \emph{Ann. Inst. Statist. Math., Tokyo}, 4:\penalty0 15--30, 1952.
\newblock \href{http://www.ams.org/mathscinet-getitem?mr=MR52072}{MR52072}.

\bibitem[Ogasawara(2021)]{MR4219331}
H.~Ogasawara.
\newblock A non-recursive formula for various moments of the multivariate
  normal distribution with sectional truncation.
\newblock \emph{J. Multivariate Anal.}, 183:\penalty0 104729, 14 pp., 2021.
\newblock \href{http://www.ams.org/mathscinet-getitem?mr=MR4219331}{MR4219331}.

\bibitem[Richards(2010)]{MR2723248}
D.~St.~P. Richards.
\newblock Functions of matrix argument.
\newblock In F.~W.~J. Olver, D.~W. Lozier, R.~F. Boisvert, and C.~W. Clark,
  editors, \emph{N{IST} {H}andbook of {M}athematical {F}unctions}, chapter~35,
  pages 767--774. Cambridge University Press, Cambridge, 2010.
\newblock ISBN 978-0-521-14063-8.
\newblock \href{http://www.ams.org/mathscinet-getitem?mr=MR2723248}{MR2723248}.

\bibitem[Royen(2014)]{MR3289621}
T.~Royen.
\newblock A simple proof of the {G}aussian correlation conjecture extended to
  some multivariate gamma distributions.
\newblock \emph{Far East J. Theor. Stat.}, 48\penalty0 (2):\penalty0 139--145,
  2014.
\newblock \href{http://www.ams.org/mathscinet-getitem?mr=MR3289621}{MR3289621}.

\bibitem[Russell and Sun(2022{\natexlab{a}})]{MR4445681}
O.~Russell and W.~Sun.
\newblock Some new {G}aussian product inequalities.
\newblock \emph{J. Math. Anal. Appl.}, 515\penalty0 (2):\penalty0 126439, 21
  pp., 2022{\natexlab{a}}.
\newblock \href{http://www.ams.org/mathscinet-getitem?mr=MR4445681}{MR4445681}.

\bibitem[Russell and Sun(2022{\natexlab{b}})]{MR4471184}
O.~Russell and W.~Sun.
\newblock An opposite {G}aussian product inequality.
\newblock \emph{Statist. Probab. Lett.}, 191:\penalty0 109656, 6 pp.,
  2022{\natexlab{b}}.
\newblock \href{http://www.ams.org/mathscinet-getitem?mr=MR4471184}{MR4471184}.

\bibitem[Russell and Sun(2024)]{MR4760098}
O.~Russell and W.~Sun.
\newblock Using sums-of-squares to prove {G}aussian product inequalities.
\newblock \emph{Depend. Model.}, 12\penalty0 (1):\penalty0 20240003, 13 pp.,
  2024.
\newblock \href{http://www.ams.org/mathscinet-getitem?mr=MR4760098}{MR4760098}.

\bibitem[Takemura(1984)]{MR744672}
A.~Takemura.
\newblock \emph{Zonal {P}olynomials}, volume~4 of \emph{Institute of
  Mathematical Statistics Lecture Notes---Monograph Series}.
\newblock Institute of Mathematical Statistics, Hayward, CA, 1984.
\newblock ISBN 0-940600-05-6.
\newblock \href{http://www.ams.org/mathscinet-getitem?mr=MR744672}{MR744672}.

\bibitem[Vere-Jones(1997)]{MR1450811}
D.~Vere-Jones.
\newblock Alpha-permanents and their applications to multivariate gamma,
  negative binomial and ordinary binomial distributions.
\newblock \emph{New Zealand J. Math.}, 26\penalty0 (1):\penalty0 125--149,
  1997.
\newblock \href{http://www.ams.org/mathscinet-getitem?mr=MR1450811}{MR1450811}.

\bibitem[Wei(2014)]{MR3278931}
A.~Wei.
\newblock Representations of the absolute value function and applications in
  {G}aussian estimates.
\newblock \emph{J. Theoret. Probab.}, 27\penalty0 (4):\penalty0 1059--1070,
  2014.
\newblock \href{http://www.ams.org/mathscinet-getitem?mr=MR3278931}{MR3278931}.

\bibitem[Wilks(1932)]{doi:10.2307/2331979}
S.~S. Wilks.
\newblock Certain generalizations in the analysis of variance.
\newblock \emph{Biometrika}, 24\penalty0 (3/4):\penalty0 471--494, 1932.
\newblock \href{https://www.doi.org/10.2307/2331979}{doi:10.2307/2331979}.

\bibitem[Wilks(1934)]{MR1503165}
S.~S. Wilks.
\newblock Moment-generating operators for determinants of product moments in
  samples from a normal system.
\newblock \emph{Ann. of Math. (2)}, 35\penalty0 (2):\penalty0 312--340, 1934.
\newblock \href{http://www.ams.org/mathscinet-getitem?mr=MR1503165}{MR1503165}.

\bibitem[Wishart(1928)]{doi:10.2307/2331939}
J.~Wishart.
\newblock The generalised product moment distribution in samples from a normal
  multivariate population.
\newblock \emph{Biometrika}, 20A\penalty0 (1/2):\penalty0 32--52, 1928.
\newblock \newline\href{https://doi.org/10.2307/2331939}{doi:10.2307/2331939}.

\bibitem[Zhou et~al.(2024)Zhou, Zhao, Hu, and Song]{MR4666255}
Q.-Q. Zhou, H.~Zhao, Z.-C. Hu, and R.~Song.
\newblock Some new results on {G}aussian product inequalities.
\newblock \emph{J. Math. Anal. Appl.}, 531\penalty0 (2, part 2):\penalty0
  127907, 13 pp., 2024.
\newblock \href{http://www.ams.org/mathscinet-getitem?mr=MR4666255}{MR4666255}.

\end{thebibliography}

\end{document}